\documentclass[preprint,12pt]{elsarticle}
\usepackage{amssymb}
\usepackage{amsthm}
\usepackage{amsmath}
\usepackage{epic}
\newtheorem{thm}{Theorem}[section]

\newtheorem{lem}[thm]{Lemma}

\newtheorem{prou}[thm]{Definition}
\newtheorem{example}[thm]{Example}
\journal{arXiv}

\begin{document}

\begin{frontmatter}

\title{Minimum (maximum) rank of tensors and the sign nonsingular tensors}
\author[label1]{Changjiang Bu}\ead{buchangjiang@hrbeu.edu.cn}
\author[label1]{Wenzhe Wang}
\author[label3]{Lizhu Sun}
\author[label1,label2]{Jiang Zhou}

\address[label1]{College of Science, Harbin Engineering University, Harbin 150001, PR China}
\address[label3]{School of Science, Harbin Institute of Technology, Harbin 150001, PR China}
\address[label2]{College of Computer Science and Technology, Harbin Engineering University, Harbin 150001, PR China}

\begin{abstract}
In this paper, we define the minimum (maximum) rank, term rank and the sign nonsingular of tensors.
The sufficiency and necessity for the minimum rank of a real tensor to be $1$ is given. And we show that the maximum rank of a tensor is not less than the term rank. We also prove that the minimum rank of a sign nonsingular tensor is not less than the dimension of it.
And we get some characterizations of a tensor having sign left or sign right inverses.

\end{abstract}

\begin{keyword}
 Tensor\sep Minimum rank\sep Maximum rank\sep Sign nonsingular tensor\\
\emph{AMS classification:} 15A69, 15B35
\end{keyword}

\end{frontmatter}

\section{Introduction}
\label{Introduction }
For a positive integer $n$, let $[n]=\{1,\ldots,n\}$. Let $\mathbb{R}^{n_1\times\cdots\times n_k}$ be the set of the order $k$ tensors over real field. An order $k$ real tensor $\mathcal{A} =(a_{i_1\cdots i_k})\in \mathbb{R}^{n_1\times\cdots\times n_k}$ is a multidimensional array with $ n_1\times n_2\times\cdots\times n_k$ entries. When $k=2$, $\mathcal{A}$ is an $n_1\times n_2$ matrix. If $n_1=\cdots=n_k=n$, then $\mathcal{A}$ is called an order $k$ dimension $n$ tensor. The order $k$ dimension $n$ tensor $\mathcal{I}=(\delta_{i_1\cdots i_k})$ is called a unit tensor, where $\delta_{i_1\cdots i_k}=1$ if $i_1=\cdots=i_k$, $\delta_{i_1\cdots i_k}=0$ otherwise. There are some results on the research of tensors in [1-7].

 For the nonzero vector $\alpha_j\in\mathbb{R}^{n_j}$ ($j=1,\ldots,k$), let $(\alpha_j)_i$ be the $i$-th component of $\alpha_j$. The \textit{Segre outer product} of $\alpha_1,\ldots,\alpha_k$, denoted by $\alpha_1\otimes\cdots\otimes \alpha_k$, is called the rank one tensor $\mathcal{A} =(a_{i_1\cdots i_k})$ with entries $a_{i_1\cdots i_k}=(\alpha_1)_{i_1}\cdots(\alpha_k)_{i_k}$ (see \cite{ck8}). The rank of a tensor $\mathcal{A}\in\mathbb{R}^{n_1\times \cdots\times n_k}$, denoted by ${\rm rank}(\mathcal{A})$,
is the smallest $r$ such that $\mathcal{A}$ can be written as a sum of $r$ rank one tensors as follows,
\begin{align}
\mathcal{A}=\sum_{j=1}^r\alpha_{1}^j\otimes\cdots\otimes \alpha_{k}^j,\label{s1.1}
\end{align}
where $\alpha_{i}^j\neq0\ \mbox{and}\ \alpha_{i}^j\in\mathbb{R}^{n_i},~i=1,\ldots,k,~j=1,\ldots,r$ (see \cite{ck7,ck8}).

For the vector $x=(x_1,x_2,\ldots ,x_n)^\mathrm{T}$ and an order $k$ dimension $n$ tensor $ \mathcal{A}$, $\mathcal{A}x^{k-1}$ be a dimension $n$ vector whose $i$-th component is
\begin{eqnarray*}
(\mathcal{A}x^{k-1})_i=\sum\limits_{{i_2}, \ldots ,{i_k} \in [n]} {a_{ii_2\cdots i_k}x_{i_2}x_{i_3}\cdots x_{i_k}},
\end{eqnarray*}
where $i\in[n]$ (see \cite{ck4}).

\cite{ck6} defined the \textit{general tensor product}. For the dimension $n$ tensors $\mathcal{A}=(a_{i_1\cdots i_m})$ and $\mathcal{B}=(b_{i_1\cdots i_k})$ ($m\geq 2~,~k\geq1$),
the product of them is an order $(m-1)(k-1)+1$ tensor with entry
\[
(\mathcal{A\cdot B})_{i\alpha_1\cdots \alpha_{m-1}}=\sum_{i_2,\ldots,i_m\in[n]}a_{ii_2\cdots i_m}b_{i_2\alpha_1}\cdots b_{i_m\alpha_{m-1}},
\]
where $i\in[n]$, $\alpha_1,\ldots,\alpha_{m-1}\in[n]^{k-1}$. And if $\mathcal{A}\cdot\mathcal{B}=\mathcal{I}$, then $\mathcal{A}$ is called an order $m$ left inverse of $\mathcal{B}$ and $\mathcal{B}$ is called an order $k$ right inverse of $\mathcal{A}$ (see \cite{ck11}). The determinant of an order $k$ dimension $n$ tensor $\mathcal{A}$, denoted by ${\rm det}(\mathcal{A})$, is the resultant of the system of homogeneous equation $\mathcal{A}x^{k-1}=0$, where $x\in\mathbb{R}^n$ (see \cite{ck5}). \cite{ck4} researched the determinant of symmetric tensors. \cite{ck6} proved that ${\rm det}(\mathcal{A})$ is the unique polynomial on the entries of $\mathcal{A}$ satisfying the following three conditions:

(1) ${\rm det}(\mathcal{A})=0$ if and only if the system of homogeneous equation $\mathcal{A}x^{k-1}=0$ has a nonzero solution;

(2) ${\rm det}(\mathcal{I})=1$;

(3) ${\rm det}(\mathcal{A})$ is an irreducible polynomial on the entries of $\mathcal{A}$ when the entries $a_{i_1\cdots i_k}$ $(i_1,\ldots,i_k\in[n])$ of $\mathcal{A}$ are all viewed as independent different variables. If ${\rm det}(\mathcal{A})\neq0$, then $\mathcal{A}$ is called nonsingular tensor.

The sign pattern of matrices is proposed by P.A. Samuelson in the problem of sign-solvable linear systems (see \cite{ck1}).
For a real number $a$, let ${\rm sgn}(a)$ be the sign of $a$, ${\mathop{\rm sgn}} (a) = \left\{ {\begin{array}{*{20}c}
   {0\,,\;\;\; a = 0}  \\
   { -1 ,\;a < 0} \\
   { 1 ,\;\;\; a > 0}  \\
\end{array}} \right.$. For a real matrix $A=(a_{ij})_{m\times n}$, let ${\rm sgn}(A)=({\rm sgn}(a_{ij}))_{m\times n}$ denote the sign pattern of $A$ and let $\mathcal{Q}(A)= \{ \hat{A}|~{\mathop{\rm sgn}} (\hat{A}) = {\mathop{\rm sgn}} (A)\}$ denote the sign pattern class (or qualitative class) of $A$ (see \cite{ck29}). If each matrix in $\mathcal{Q}(A)$ has full row rank, then $A$ is called the \textit{L}-matrix. When $m=n$ and $A$ is an \textit{L}-matrix, $A$ is called a sign nonsingular matrix (abbreviated $SNS$ matrix). $SNS$ matrices play an important role in the research of sign-solvability linear systems. For a square matrix $A$,
the solution of the linear system $Ax=0$ has a unique sign pattern if and only if $A$ is an $SNS$ matrix (see \cite{ck12}). \cite{ck13,ck25} characterised the matrices with sign M-P inverse. \cite{ck14} researched the matrices with sign Drazin inverse. \cite{ck26,ck27} researched the sign pattern of matrices allowing nonnegative \{1, 3\} inverse, M-P inverse, and left inverse.

For a real matrix $A$, let ${\rm mr}(A)={\rm min}\{{\rm rank}(B)|B\in\mathcal{Q}(A)\}$ and ${\rm Mr}(A)={\rm max}\{{\rm rank}(B)|B\in\mathcal{Q}(A)\}$ be the minimum rank and maximum rank of $A$, respectively. The term rank of $A$, denoted by $\rho(A)$, is the maximum number of nonzero entries of $A$ no two of which are in the same row or same column \cite{ck15}. It is well known that ${\rm Mr}(A)=\rho(A)$ (see \cite{ck15}). The researches on the minimum rank of matrices have many important applications in communication complexity and neural networks [20-22].
 Some results of the matrix minimum rank are given in \cite{ck16,ck17}.

In this paper, we will research the sign pattern of real tensors.
Next, we give the definitions of the sign pattern, minimum (maximum) rank, term rank and sign nonsingular of tensors.

\begin{prou}
Let $\mathcal{A} =(a_{i_1\cdots i_k})\in  \mathbb{R}^{n_1\times\cdots\times n_k}$. The tensor ${\rm sgn}(\mathcal{A})=({\rm sgn}(a_{i_1\cdots i_k}) )$ is called the sign pattern of $\mathcal{A}$ and $\mathcal{Q}(\mathcal{A}) = \{ \hat{\mathcal{A}}|~{\mathop{\rm sgn}} (\hat{\mathcal{A}}) = {\mathop{\rm sgn}} (\mathcal{A})\}$ is called the sign pattern class (or qualitative class) of $\mathcal{A}$.
 \end{prou}

 \begin{prou}\label{defi 1}
Let $\mathcal{A}$ be a real tensor. The ${\rm mr}(\mathcal{A})={\rm min}\{{\rm rank}(\mathcal{B})|\mathcal{B}\in\mathcal{Q}(\mathcal{A})\}$ is called the the minimum rank of $\mathcal{A}$;
 ${\rm Mr}(\mathcal{A})={\rm max}\{{\rm rank}(\mathcal{B})|\mathcal{B}\in\mathcal{Q}(\mathcal{A})\}$ is called the maximum rank of $\mathcal{A}$; the maximum number of nonzero entries of $\mathcal{A}$ no two of which have the same index in the same dimension is called the term rank of $\mathcal{A}$, denoted by $\rho(\mathcal{A})$.
 \end{prou}
For a dimension $n$ unit tensor $\mathcal{I}$, it is easy to see $\rho(\mathcal{I})=n$.

\begin{prou}
Let $\mathcal{A}$ be an order $k$ dimension $n$ tensor. If each tensor in $\mathcal{Q}(\mathcal{A})$ is a nonsingular tensor, then $\mathcal{A}$ is called a sign nonsingular tensor (abbreviated $SNS$ tensor).
 \end{prou}

Let $\mathcal{A}$ be an order $k$ dimension $n$ tensor. If the sign pattern of the solutions of $\widetilde{\mathcal{A}}x^{k-1}=0$ are the same
for all the tensors $\widetilde{\mathcal{A}}\in\mathcal{Q}(\mathcal{A})$, then $0$ is the unique solution of $\widetilde{\mathcal{A}}x^{k-1}=0$.
Therefore, ${\rm det}(\widetilde{\mathcal{A}})\neq 0$, so $\mathcal{A}$ is a sign nonsingular tensor. Thus, we have the equation ${\mathcal{A}}x^{k-1}=0$ is sign solvable if and only if $\mathcal{A}$ is a sign nonsingular tensor.

\begin{prou}
Let $\mathcal{A}$ be an order $k$ dimension $n$ tensor. If each tensor in $\mathcal{Q}(\mathcal{A})$ has an order $m$ left (right) inverse, then $\mathcal{A}$ is called having an order $m$ sign left (right) inverse.
\end{prou}

We organize this paper as follows. In the section 2, some lemmas are presented. In the section 3, we give that
the sufficient condition for real tensors having the same minimum rank or maximum rank; the relations of the minimum (maximum) rank between tensor and subtensor;
the sufficiency and necessity for the minimum rank of a real tensor to be $1$. And in the section 4, some results are showed including that the minimum rank of a sign nonsingular tensor is not less than its dimension; the maximum rank of tensor is not less than the term rank; the tensor having sign left (right) inverse is a sign nonsingular tensor; the sufficiency and necessity for a real tensor having order $2$ sign left or right inverses.

\section{Preliminaries}

Let $\mathcal{A}=(a_{i_1\cdots i_k})\in\mathbb{R}^{n_1\times\cdots\times n_k}$ and the matrix $B^{(p)}=(b_{ji}^{(p)})\in\mathbb{R}^{c_p\times n_p}~(p=1,\ldots,k)$. The multilinear transform of $\mathcal{A}$ is defined as follows
 \[
 \mathcal{A}'=(a'_{j_1\cdots j_k})=(B^{(1)},\ldots,B^{(k)})\cdot\mathcal{A}\in\mathbb{R}^{c_1\times\cdots\times c_k},
 \]
where
$
 a'_{j_1\cdots j_k}=\sum_{i_1,\ldots,i_k=1}^{n_1,\ldots,n_k}b^{(1)}_{j_1i_1}\cdots b^{(k)}_{j_ki_k}a_{i_1\cdots i_k}$ (see \cite{ck7,ck8}).

\begin{lem}\textup{\cite{ck8}}\label{le2.1}
  Let $\mathcal{A}\in\mathbb{R}^{n_1\times\cdots\times n_k}$ and $L_i\in\mathbb{R}^{c_i\times n_i}$ $(i=1,\ldots,k)$, then
  \[
  {\rm rank}((L_1,\ldots,L_k)\cdot\mathcal{A})\leq {\rm rank}(\mathcal{A}).
  \]
If $L_1, \ldots, L_k$ are nonsingular, then the equality holds.
\end{lem}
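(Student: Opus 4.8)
The plan is to prove the inequality by pushing a rank one decomposition of $\mathcal{A}$ through the multilinear transform, and then to deduce equality in the nonsingular case by invoking the inequality a second time with the inverse matrices.

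First I would record the elementary identity
\[
(L_1,\ldots,L_k)\cdot(\alpha_1\otimes\cdots\otimes\alpha_k)=(L_1\alpha_1)\otimes\cdots\otimes(L_k\alpha_k),
\]
which is immediate from the definition of the multilinear transform: the $(j_1,\ldots,j_k)$ entry of the left hand side is $\sum_{i_1,\ldots,i_k}b^{(1)}_{j_1i_1}\cdots b^{(k)}_{j_ki_k}(\alpha_1)_{i_1}\cdots(\alpha_k)_{i_k}=\prod_{p=1}^{k}\bigl(\sum_{i_p}b^{(p)}_{j_pi_p}(\alpha_p)_{i_p}\bigr)=\prod_{p=1}^{k}(L_p\alpha_p)_{j_p}$. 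Next, writing $r={\rm rank}(\mathcal{A})$ and fixing a decomposition $\mathcal{A}=\sum_{j=1}^{r}\alpha_{1}^{j}\otimes\cdots\otimes\alpha_{k}^{j}$ as in (\ref{s1.1}), linearity of the multilinear transform in its tensor argument together with the identity above gives
\[
(L_1,\ldots,L_k)\cdot\mathcal{A}=\sum_{j=1}^{r}(L_1\alpha_{1}^{j})\otimes\cdots\otimes(L_k\alpha_{k}^{j}).
\]
Deleting those summands for which some factor $L_p\alpha_{p}^{j}$ is the zero vector (each such summand is the zero tensor) leaves an expression of $(L_1,\ldots,L_k)\cdot\mathcal{A}$ as a sum of at most $r$ rank one tensors, so ${\rm rank}((L_1,\ldots,L_k)\cdot\mathcal{A})\leq r={\rm rank}(\mathcal{A})$.

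For the equality statement I would use the composition rule $(M_1,\ldots,M_k)\cdot\bigl((L_1,\ldots,L_k)\cdot\mathcal{A}\bigr)=(M_1L_1,\ldots,M_kL_k)\cdot\mathcal{A}$, again a one line index computation from the definition. When each $L_p$ is nonsingular, taking $M_p=L_p^{-1}$ yields $(L_1^{-1},\ldots,L_k^{-1})\cdot\bigl((L_1,\ldots,L_k)\cdot\mathcal{A}\bigr)=\mathcal{A}$. Applying the already proved inequality to the tensor $(L_1,\ldots,L_k)\cdot\mathcal{A}$ and the matrices $L_p^{-1}$ gives ${\rm rank}(\mathcal{A})\leq{\rm rank}((L_1,\ldots,L_k)\cdot\mathcal{A})$, and together with the first part this forces equality.

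The only real content is the bookkeeping identity for the transform of a Segre outer product and the composition rule; both reduce to rearranging a finite product of sums, so I do not anticipate a genuine obstacle. The one point requiring care is the nonzero vector convention built into the definition of ${\rm rank}$ in (\ref{s1.1}), which is exactly why the ``discard the vanishing summands'' step is needed before concluding that the transformed tensor has rank at most $r$.
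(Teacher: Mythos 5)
Your proof is correct, and it is essentially the standard argument: the paper itself gives no proof of this lemma (it is quoted from \cite{ck8}), but the identity you establish for the transform of a Segre outer product is exactly the one the paper later displays in equation (\ref{xinequ1}), and your two-step use of the inequality with $L_p^{-1}$ to get equality is the argument the cited source uses. No gaps; the care you take in discarding summands with a vanishing factor $L_p\alpha_p^j$ is the right (and only) delicate point given the nonzero-vector convention in (\ref{s1.1}).
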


For a tensor $\mathcal{A}=(a_{i_1\cdots i_k})\in\mathbb{R}^{n_1\times \cdots\times n_k}$,
let the vector
\begin{align*}
 &\mathcal{A}_{d_1\cdots d_{s-1}\bullet d_{s+1}\cdots d_k}\\
 =&(a_{d_1\cdots d_{s-1} 1 d_{s+1}\cdots d_k},~a_{d_1\cdots d_{s-1} 2 d_{s+1}\cdots d_k},~\cdots~,~a_{d_1\cdots d_{s-1} n_s d_{s+1}\cdots d_k})^\top\in\mathbb{R}^{n_s}
\end{align*}
 and
$$V_s(\mathcal{A})=\{\mathcal{A}_{d_1\cdots d_{s-1}\bullet d_{s+1}\cdots d_k}|~d_j\in[n_j],~j=1,\ldots,s-1,s+1,\ldots,k\}.$$
Let
$$r_s(\mathcal{A})={\rm dim}({\rm span}(V_s(\mathcal{A})))$$
 be the $s$-th order {\rm rank} of $\mathcal{A}$. The multilinear rank of $\mathcal{A}$ is denoted by $$rank_{\boxplus}(\mathcal{A})=(r_1(\mathcal{A}),\ldots , r_k(\mathcal{A}))~ (see~[8]).$$

 Let $a=(a_1,a_2,\ldots,a_n)$ and $b=(b_1,b_2,\ldots,b_n)$ be tow real tuples. In this paper, $a\leq b$~(or $a=b$) implies $a_i\leq b_i$ (or $a_i=b_i$), $i=1,2,\ldots,n $.

\begin{lem}\textup{\cite{ck8}}\label{le2.2}
  Let $\mathcal{A}\in\mathbb{R}^{n_1\times\cdots\times n_k}$
and $L_i\in\mathbb{R}^{c_i\times n_i}$ $(i=1,\ldots,k)$, then
  \[
  {\rm rank}_\boxplus((L_1,\ldots,L_k)\cdot\mathcal{A})\leq {\rm rank}_\boxplus(\mathcal{A}).
  \]
If $L_1,\ldots, L_k$ are nonsingular, then the equality holds.
 \end{lem}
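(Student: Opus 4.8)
The plan is to prove the statement one coordinate at a time: fix $s\in[k]$ and show $r_s((L_1,\ldots,L_k)\cdot\mathcal{A})\leq r_s(\mathcal{A})$; the equality assertion then follows from a symmetry (inversion) argument.

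First I would unwind the definition of the multilinear transform. Write $\mathcal{A}'=(a'_{j_1\cdots j_k})=(L_1,\ldots,L_k)\cdot\mathcal{A}$ and fix indices $j_p\in[c_p]$ for all $p\neq s$. Separating the summation variable $i_s$ from the rest, the $t$-th entry of the vector $\mathcal{A}'_{j_1\cdots j_{s-1}\bullet j_{s+1}\cdots j_k}\in\mathbb{R}^{c_s}$ equals $\sum_{i_s=1}^{n_s}(L_s)_{t i_s}\,c_{i_s}$, where $c_{i_s}=\sum_{i_p\in[n_p],\ p\neq s}\big(\prod_{p\neq s}(L_p)_{j_p i_p}\big)\,a_{i_1\cdots i_s\cdots i_k}$. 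The point is that the coefficient $\prod_{p\neq s}(L_p)_{j_p i_p}$ does not involve $i_s$, so the vector $(c_1,\ldots,c_{n_s})^\top\in\mathbb{R}^{n_s}$ is the linear combination $\sum_{i_p\in[n_p],\ p\neq s}\big(\prod_{p\neq s}(L_p)_{j_p i_p}\big)\,\mathcal{A}_{i_1\cdots i_{s-1}\bullet i_{s+1}\cdots i_k}$ of fibers of $\mathcal{A}$, hence lies in ${\rm span}(V_s(\mathcal{A}))$. Consequently $\mathcal{A}'_{j_1\cdots j_{s-1}\bullet j_{s+1}\cdots j_k}=L_s(c_1,\ldots,c_{n_s})^\top\in L_s\big({\rm span}(V_s(\mathcal{A}))\big)$, so ${\rm span}(V_s(\mathcal{A}'))\subseteq L_s\big({\rm span}(V_s(\mathcal{A}))\big)$.

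From this inclusion, $r_s(\mathcal{A}')={\rm dim}\,{\rm span}(V_s(\mathcal{A}'))\leq{\rm dim}\,L_s\big({\rm span}(V_s(\mathcal{A}))\big)\leq{\rm dim}\,{\rm span}(V_s(\mathcal{A}))=r_s(\mathcal{A})$, and since $s$ was arbitrary, ${\rm rank}_\boxplus(\mathcal{A}')\leq{\rm rank}_\boxplus(\mathcal{A})$. For the equality case, assume all $L_i$ are nonsingular; then they are square and, using the composition rule $(M_1,\ldots,M_k)\cdot\big((L_1,\ldots,L_k)\cdot\mathcal{A}\big)=(M_1L_1,\ldots,M_kL_k)\cdot\mathcal{A}$ together with $(I,\ldots,I)\cdot\mathcal{A}=\mathcal{A}$ (both immediate from the definition), we get $\mathcal{A}=(L_1^{-1},\ldots,L_k^{-1})\cdot\mathcal{A}'$. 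Applying the inequality already proved to this identity yields ${\rm rank}_\boxplus(\mathcal{A})\leq{\rm rank}_\boxplus(\mathcal{A}')\leq{\rm rank}_\boxplus(\mathcal{A})$, so equality holds in every coordinate.

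The only real work is bookkeeping: splitting off the $i_s$-summation correctly and checking that the residual vector genuinely lands in ${\rm span}(V_s(\mathcal{A}))$ rather than in a larger space. A cleaner-looking alternative is to argue through mode-$s$ unfoldings: $r_s(\mathcal{A})$ is the rank of the matrix $A_{(s)}\in\mathbb{R}^{n_s\times\prod_{p\neq s}n_p}$ whose columns are the members of $V_s(\mathcal{A})$; the mode-$s$ unfolding of $\mathcal{A}'$ is $L_s\,A_{(s)}\,(L_1\otimes\cdots\otimes L_{s-1}\otimes L_{s+1}\otimes\cdots\otimes L_k)^\top$; the rank of a matrix product never exceeds the rank of a factor, and if each $L_i$ is nonsingular then so is the Kronecker-product factor, so the rank is preserved. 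I would use whichever of the two formulations reads best alongside Lemma \ref{le2.1}.
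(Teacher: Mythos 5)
Lemma \ref{le2.2} is quoted in the paper from reference [8] (De Silva and Lim) without any proof, so there is nothing internal to compare against; your argument is a correct, self-contained proof and is essentially the standard one from that source. The coordinatewise computation is right: separating the $i_s$-summation shows each mode-$s$ fiber of $(L_1,\ldots,L_k)\cdot\mathcal{A}$ equals $L_s$ applied to a linear combination of mode-$s$ fibers of $\mathcal{A}$, giving ${\rm span}(V_s(\mathcal{A}'))\subseteq L_s({\rm span}(V_s(\mathcal{A})))$ and hence $r_s(\mathcal{A}')\leq r_s(\mathcal{A})$, and the equality case follows correctly from the composition rule applied to the inverses. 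The unfolding reformulation you mention is equivalent (up to the ordering convention for the Kronecker factors, which only permutes columns and does not affect rank) and is the form in which the result usually appears; either version is fine.
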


 \begin{lem}\textup{\cite{ck8}}\label{le2.3}
 Let $\mathcal{A}\in\mathbb{R}^{n_1\times\cdots\times n_k}$, then
 \[
 {\rm rank}(\mathcal{A})\geq{\rm max}\{r_i(\mathcal{A})|~i=1,\ldots,k\}.
 \]
 \end{lem}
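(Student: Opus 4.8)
The plan is to expand $\mathcal{A}$ in a rank-one decomposition of minimal length and simply read off what the $s$-th order fibers look like. Write $r={\rm rank}(\mathcal{A})$ and fix a decomposition
\[
\mathcal{A}=\sum_{j=1}^{r}\alpha_1^j\otimes\cdots\otimes\alpha_k^j
\]
as in \eqref{s1.1}. I would fix an arbitrary $s\in[k]$ and show $r_s(\mathcal{A})\le r$; taking the maximum over $s$ then gives the claim.

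The key observation I would use is that the $s$-th order fiber of a single rank-one tensor is a scalar multiple of its $s$-th factor $\alpha_s^j$. Concretely, for fixed indices $d_t\in[n_t]$ with $t\neq s$, the entry $(\alpha_1^j\otimes\cdots\otimes\alpha_k^j)_{d_1\cdots d_{s-1} i d_{s+1}\cdots d_k}$ equals $\bigl(\prod_{t\neq s}(\alpha_t^j)_{d_t}\bigr)(\alpha_s^j)_i$, so the corresponding vector in $\mathbb{R}^{n_s}$ is $\bigl(\prod_{t\neq s}(\alpha_t^j)_{d_t}\bigr)\,\alpha_s^j$. Summing over $j$, I get
\[
\mathcal{A}_{d_1\cdots d_{s-1}\bullet d_{s+1}\cdots d_k}=\sum_{j=1}^{r}\Bigl(\prod_{t\neq s}(\alpha_t^j)_{d_t}\Bigr)\,\alpha_s^j\in{\rm span}\{\alpha_s^1,\ldots,\alpha_s^r\}.
\]

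Since this holds for every choice of the fixed indices, $V_s(\mathcal{A})\subseteq{\rm span}\{\alpha_s^1,\ldots,\alpha_s^r\}$, hence ${\rm span}(V_s(\mathcal{A}))\subseteq{\rm span}\{\alpha_s^1,\ldots,\alpha_s^r\}$ and therefore $r_s(\mathcal{A})={\rm dim}({\rm span}(V_s(\mathcal{A})))\le r$. As $s$ was arbitrary, ${\rm max}\{r_i(\mathcal{A})\mid i=1,\ldots,k\}\le r={\rm rank}(\mathcal{A})$, which is the desired inequality.

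I do not expect a genuine obstacle here: the argument is a one-line span inclusion once the fiber of a rank-one tensor is identified correctly. The only mild subtlety to get right is the index bookkeeping when extracting the $s$-th fiber of $\alpha_1^j\otimes\cdots\otimes\alpha_k^j$ (keeping the scalar $\prod_{t\neq s}(\alpha_t^j)_{d_t}$ separate from the vector $\alpha_s^j$); everything else, including the existence of a finite minimal real rank-one decomposition, is immediate from the definition of ${\rm rank}(\mathcal{A})$.
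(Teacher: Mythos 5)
Your proof is correct, and it is essentially the argument the paper itself sketches at the start of Section 4 (where the authors note that every vector in $V_i(\mathcal{A})$ is a linear combination of $\alpha_i^1,\ldots,\alpha_i^r$, so $r_i(\mathcal{A})\le r$); the lemma itself is only cited from the literature, with no separate proof given. Your index bookkeeping for the fiber of a rank-one term is right, and the span-inclusion step is exactly the intended one.
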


\begin{lem}\textup{\cite{ck11}}\label{le2.4}
Let $f_1(x_1,\ldots,x_n),~\ldots~,f_r(x_1,\ldots,x_n)$ be the homogeneous polynomials with degree $m$ and let $$F(x_1,\ldots,x_n) = (f_1(x_1,\ldots,x_n),\ldots, f_r(x_1,\ldots,x_n))^\top.$$
If $r < n$, then $F(x_1,\ldots,x_n) = 0$ has a nonzero solution.
\end{lem}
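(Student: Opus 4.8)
The plan is to reduce the statement to a dimension count for an affine variety. First I would fix the ground field to be $\mathbb{C}$ (or any algebraically closed field): this is the natural setting, since over $\mathbb{R}$ the statement already fails for $n=2$, $r=1$, $f_1=x_1^2+x_2^2$, and it is the complex field that is relevant to the resultant/determinant discussion preceding the lemma. Assuming $m\ge 1$ (for $m=0$ there is nothing to prove), set $Z=\{x\in\mathbb{C}^n: f_1(x)=\cdots=f_r(x)=0\}$. Since each $f_i$ is homogeneous, the ideal $(f_1,\dots,f_r)$ is homogeneous, so $0\in Z$ and $Z$ is a cone; the conclusion that $F=0$ has a nonzero solution is precisely the assertion $Z\neq\{0\}$.

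The main step is a lower bound on $\dim Z$. I would invoke the standard fact (Krull's height theorem, also called the affine dimension theorem) that the common zero set of $r$ polynomials in $\mathbb{C}^n$ has every irreducible component of dimension at least $n-r$. Applying this to $Z$, every irreducible component of $Z$ has dimension at least $n-r$. Since $0\in Z$, the origin lies on some irreducible component $C$ of $Z$, and $\dim C\ge n-r\ge 1$ because $r<n$. As $\dim\{0\}=0<\dim C$, the component $C$ is strictly larger than $\{0\}$, hence contains a point $x\neq 0$, and this $x$ satisfies $F(x)=0$. That completes the argument.

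I expect the only genuine subtlety to be the ground field. The inequality $n-r\ge 1$ is exactly what drives the proof, but passing from ``$Z$ has a component of positive dimension'' to ``$Z$ has a point other than the origin'' really does use algebraic closedness, since a positive-dimensional variety over $\mathbb{C}$ is infinite. If one wanted to remain over $\mathbb{R}$, one would have to restrict to odd $m$, in which case $x\mapsto(f_1(x),\dots,f_r(x))$ is an odd continuous map $S^{n-1}\to\mathbb{R}^r$ with $r\le n-1$, and the Borsuk--Ulam theorem supplies a zero on $S^{n-1}$. In every case the homogeneity hypothesis is used only to place the origin on $Z$, so that a positive-dimensional component is forced to contribute an extra point; beyond that, nothing deeper than a standard dimension theorem (or Borsuk--Ulam) is needed.
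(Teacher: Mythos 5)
Your argument is correct, and it is essentially the standard proof of this fact: the paper itself offers no proof (Lemma~\ref{le2.4} is imported from \cite{ck11}), and the argument used there is the same projective/affine dimension count you give --- $r$ hypersurfaces in $\mathbb{C}^n$ cut the dimension down by at most $r$, homogeneity puts the origin on the zero set, and $n-r\ge 1$ forces a component through the origin of positive dimension, hence a nonzero point. You are also right to make explicit two hypotheses the paper leaves tacit, namely $m\ge 1$ and the fact that the solution is sought in $\mathbb{C}^n$ rather than $\mathbb{R}^n$; the latter is exactly how the lemma is applied in the proof of Theorem~\ref{thm3.14}, where $x\in\mathbb{C}^n$. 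One small inaccuracy: for $m=0$ it is not that ``there is nothing to prove'' but that the statement is false (a nonzero constant $f_1$ has no zeros at all), so $m\ge 1$ must be read as part of the hypothesis rather than dismissed as trivial. The Borsuk--Ulam remark for odd $m$ over $\mathbb{R}$ is a correct and nice aside, though not needed for the paper's use of the lemma.
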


\begin{lem}\textup{\cite{ck5}}\label{le2.5}
Let $\mathcal{A}$ and $\mathcal{B}$ be the dimension $n$ tensors of order $m$ and order $k$, respectively, then ${\rm det}(\mathcal{A\cdot B})=\left({\rm det}(\mathcal{A})\right)^{(k-1)^{n-1}}\left({\rm det}(\mathcal{B})\right)^{(m-1)^n}$.
\end{lem}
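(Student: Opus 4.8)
\emph{Proof idea.} The plan is to reproduce, at the level of resultants, the classical proof that $\det(AB)=\det(A)\det(B)$; throughout we work over $\mathbb{C}$ and treat the entries of $\mathcal{A}$ and $\mathcal{B}$ as independent indeterminates. The cornerstone is the evaluation identity
\[
(\mathcal{A}\cdot\mathcal{B})\,x^{(m-1)(k-1)}=\mathcal{A}\bigl(\mathcal{B}x^{k-1}\bigr)^{m-1},\qquad x\in\mathbb{C}^{n}
\]
(recall $\mathcal{A}\cdot\mathcal{B}$ has order $(m-1)(k-1)+1$, so it is applied to $x^{(m-1)(k-1)}$). I would establish it by expanding the $i$-th coordinate of the left-hand side straight from the definition of the general product, splitting off the multi-indices $\alpha_{1},\dots,\alpha_{m-1}\in[n]^{k-1}$ and observing that each inner sum $\sum_{\alpha_{j}}b_{i_{j+1}\alpha_{j}}x^{\alpha_{j}}$ is exactly the $i_{j+1}$-th coordinate of $\mathcal{B}x^{k-1}$. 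This is the only computational step, and it is routine.

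Granting the identity, I claim that ${\rm det}(\mathcal{A}\cdot\mathcal{B})=0$ forces ${\rm det}(\mathcal{A})=0$ or ${\rm det}(\mathcal{B})=0$: if $(\mathcal{A}\cdot\mathcal{B})x_{0}^{(m-1)(k-1)}=0$ for some $x_{0}\ne0$ and we set $y_{0}=\mathcal{B}x_{0}^{k-1}$, then either $y_{0}=0$, so that $\mathcal{B}x^{k-1}=0$ has the nonzero solution $x_{0}$ and ${\rm det}(\mathcal{B})=0$ by characterisation (1), or $y_{0}\ne0$, in which case the identity gives $\mathcal{A}y_{0}^{m-1}=0$ and ${\rm det}(\mathcal{A})=0$. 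Consequently the zero locus of the polynomial $f:={\rm det}(\mathcal{A}\cdot\mathcal{B})$ is contained in $V({\rm det}(\mathcal{A}))\cup V({\rm det}(\mathcal{B}))$. Since $f$ is not identically zero (at $\mathcal{A}=\mathcal{B}=\mathcal{I}$ a short check gives $\mathcal{I}\cdot\mathcal{I}=\mathcal{I}$, hence $f=1$ there), and since ${\rm det}(\mathcal{A})$ and ${\rm det}(\mathcal{B})$ are irreducible by characterisation (3) and involve disjoint variables (hence remain non-associate irreducibles in the joint polynomial ring), Hilbert's Nullstellensatz and unique factorisation force every irreducible factor of $f$ to be a scalar multiple of ${\rm det}(\mathcal{A})$ or of ${\rm det}(\mathcal{B})$; thus ${\rm det}(\mathcal{A}\cdot\mathcal{B})=c\,\bigl({\rm det}(\mathcal{A})\bigr)^{a}\bigl({\rm det}(\mathcal{B})\bigr)^{b}$ for some $c\in\mathbb{C}\setminus\{0\}$ and integers $a,b\ge0$.

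To identify $a,b,c$ I would count bidegrees, using the standard fact that ${\rm det}$ of an order-$r$ dimension-$n$ tensor is homogeneous of degree $n(r-1)^{n-1}$ in its entries, together with the fact that each entry of $\mathcal{A}\cdot\mathcal{B}$ is homogeneous of degree $1$ in the entries of $\mathcal{A}$ and of degree $m-1$ in the entries of $\mathcal{B}$. Then $f$ has degree $n\bigl((m-1)(k-1)\bigr)^{n-1}$ in the entries of $\mathcal{A}$ and $n\bigl((m-1)(k-1)\bigr)^{n-1}(m-1)$ in those of $\mathcal{B}$, whereas $\bigl({\rm det}(\mathcal{A})\bigr)^{a}\bigl({\rm det}(\mathcal{B})\bigr)^{b}$ has these degrees equal to $a\,n(m-1)^{n-1}$ and $b\,n(k-1)^{n-1}$; equating yields $a=(k-1)^{n-1}$ and $b=(m-1)^{n}$. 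Finally, evaluating at $\mathcal{A}=\mathcal{B}=\mathcal{I}$ and using ${\rm det}(\mathcal{I})=1$ (characterisation (2)) gives $c=1$, which finishes the proof. The step requiring the most care is the passage from knowing that $f$ vanishes on $V({\rm det}\,\mathcal{A})\cup V({\rm det}\,\mathcal{B})$ to the explicit factorisation: it rests on the irreducibility of the tensor determinant (characterisation (3)) and on its degree formula, neither of which is elementary, though both are classical facts about resultants.
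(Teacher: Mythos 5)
The paper offers no proof of this lemma; it is imported verbatim from the cited source \cite{ck5}, so there is nothing internal to compare against. Your argument is correct and is essentially the standard resultant-theoretic proof of that cited result: the composition identity $(\mathcal{A}\cdot\mathcal{B})x^{(m-1)(k-1)}=\mathcal{A}(\mathcal{B}x^{k-1})^{m-1}$, the Nullstellensatz/irreducibility step forcing ${\rm det}(\mathcal{A}\cdot\mathcal{B})=c({\rm det}\mathcal{A})^{a}({\rm det}\mathcal{B})^{b}$, and the bidegree count (valid because $f$ is bihomogeneous of the exact bidegree $(N,N(m-1))$ once one notes $f\not\equiv0$, e.g.\ from $f=1$ at $\mathcal{A}=\mathcal{B}=\mathcal{I}$) pinning down $a=(k-1)^{n-1}$, $b=(m-1)^{n}$, $c=1$. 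The only external inputs are the irreducibility and degree formula for the multipolynomial resultant, both classical and both implicitly assumed by the paper's axiomatic description of ${\rm det}$.
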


\begin{lem}\textup{\cite{ck11}}\label{le2.6}
For an order $k$ tensor $\mathcal{A}\in\mathbb{R}^{n\times\cdots\times n}$, $\mathcal{A}$ has an order $2$ left inverse if and only if there exists an invertible matrix $P$ such that $\mathcal{A}=P\cdot \mathcal{I}$; $\mathcal{A}$ has an order $2$ right inverse if and only if there exists an invertible matrix $Q$ such that $\mathcal{A}=\mathcal{I}\cdot Q$.
\end{lem}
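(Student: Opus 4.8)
The plan is to prove the two equivalences (for the order $2$ left inverse and for the order $2$ right inverse) in parallel, since one is obtained from the other by transposing. The first step is a change of language. Writing out the index formula of the general tensor product when one factor is a matrix, one checks that for an $n\times n$ matrix $P$ the product $P\cdot\mathcal{A}$ is exactly the multilinear transform $(P,I,\ldots,I)\cdot\mathcal{A}$ (multiplication of $\mathcal{A}$ in its first mode), while for an $n\times n$ matrix $Q$ the product $\mathcal{A}\cdot Q$ equals $(I,Q^{\top},\ldots,Q^{\top})\cdot\mathcal{A}$ (multiplication in every mode except the first). Hence ``$\mathcal{A}$ has an order $2$ left inverse'' means precisely that $(P,I,\ldots,I)\cdot\mathcal{A}=\mathcal{I}$ for some matrix $P$, and ``$\mathcal{A}$ has an order $2$ right inverse'' means $(I,Q^{\top},\ldots,Q^{\top})\cdot\mathcal{A}=\mathcal{I}$ for some matrix $Q$. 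I would also record the composition rule $(L_1,\ldots,L_k)\cdot\bigl((M_1,\ldots,M_k)\cdot\mathcal{A}\bigr)=(L_1M_1,\ldots,L_kM_k)\cdot\mathcal{A}$, a one-line index computation, which in particular gives associativity of the product by matrix factors and the normalizations $I\cdot\mathcal{A}=\mathcal{A}=\mathcal{A}\cdot I$.

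The sufficiency directions are then immediate. If $\mathcal{A}=P\cdot\mathcal{I}=(P,I,\ldots,I)\cdot\mathcal{I}$ with $P$ invertible, applying $(P^{-1},I,\ldots,I)$ yields $P^{-1}\cdot\mathcal{A}=\mathcal{I}$, so the matrix $P^{-1}$ is an order $2$ left inverse of $\mathcal{A}$; and if $\mathcal{A}=\mathcal{I}\cdot Q$ with $Q$ invertible, associativity gives $\mathcal{A}\cdot Q^{-1}=\mathcal{I}\cdot(QQ^{-1})=\mathcal{I}$, so $Q^{-1}$ is an order $2$ right inverse. For the necessity directions I would use Lemma~\ref{le2.5}. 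If $P\cdot\mathcal{A}=\mathcal{I}$, then, reading $P$ as a tensor of order $2$ and $\mathcal{A}$ as a tensor of order $k$, Lemma~\ref{le2.5} gives $1={\rm det}(\mathcal{I})=\bigl({\rm det}(P)\bigr)^{(k-1)^{n-1}}{\rm det}(\mathcal{A})$, which forces ${\rm det}(P)\neq0$; hence $P$ is an invertible matrix and $\mathcal{A}=P^{-1}\cdot(P\cdot\mathcal{A})=P^{-1}\cdot\mathcal{I}$. Symmetrically, if $\mathcal{A}\cdot Q=\mathcal{I}$, Lemma~\ref{le2.5} (now with $\mathcal{A}$ of order $k$ on the left and $Q$ of order $2$ on the right) gives $1={\rm det}(\mathcal{A})\bigl({\rm det}(Q)\bigr)^{(k-1)^{n}}$, so ${\rm det}(Q)\neq0$, $Q$ is invertible, and $\mathcal{A}=(\mathcal{A}\cdot Q)\cdot Q^{-1}=\mathcal{I}\cdot Q^{-1}$.

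The point that needs the most care is the bookkeeping in the necessity step: the orders of the two factors must be matched to Lemma~\ref{le2.5} correctly, so that the exponent of ${\rm det}(\mathcal{A})$ (resp.\ of ${\rm det}(Q)$) works out to $1$ and the identity really does force ${\rm det}(P)\neq0$ (resp.\ ${\rm det}(Q)\neq0$); and one should remark that for a tensor of order $2$ the determinant of the introduction coincides with the usual matrix determinant, so that ``${\rm det}(P)\neq0$'' is the same as ``$P$ invertible''. An alternative to Lemma~\ref{le2.5} at this step is to pass to the mode-$1$ flattening of $P\cdot\mathcal{A}=\mathcal{I}$, which reads $PA_{(1)}=I_{(1)}$ with $I_{(1)}$ the $n\times n^{k-1}$ flattening of the unit tensor; since ${\rm rank}(I_{(1)})=n$ this already forces ${\rm rank}(P)=n$, and the rest goes through as above, the right-inverse case being handled by flattening along the second mode instead.
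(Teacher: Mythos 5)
Your argument is correct, but note that the paper itself offers no proof of Lemma~\ref{le2.6} to compare against: it is imported verbatim from \cite{ck11}. Judged on its own terms, your reconstruction is sound and self-contained modulo Lemma~\ref{le2.5}. The two translation identities you start from, $P\cdot\mathcal{A}=(P,I,\ldots,I)\cdot\mathcal{A}$ and $\mathcal{A}\cdot Q=(I,Q^{\top},\ldots,Q^{\top})\cdot\mathcal{A}$, are exactly the ones the paper uses elsewhere (in the displays surrounding equations (4)--(6) and in the proof of Theorem~\ref{thm3.18}), and your use of the determinant product formula to extract ${\rm det}(P)\neq0$ from $P\cdot\mathcal{A}=\mathcal{I}$ mirrors the paper's own proof of Theorem~\ref{thm3.151}; the exponents $\bigl({\rm det}(P)\bigr)^{(k-1)^{n-1}}{\rm det}(\mathcal{A})$ and ${\rm det}(\mathcal{A})\bigl({\rm det}(Q)\bigr)^{(k-1)^{n}}$ are matched to Lemma~\ref{le2.5} correctly in both cases. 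Your closing remarks address the only two points that genuinely need care: that the resultant-determinant of an order-$2$ tensor is the usual matrix determinant (so ${\rm det}(P)\neq0$ really does mean $P$ is invertible), and that one must justify cancelling $P$ (resp.\ $Q$) via the composition rule for multilinear transforms. The flattening alternative you sketch for the necessity step is also viable and has the minor advantage of avoiding the determinant machinery altogether, at the cost of slightly more bookkeeping in the right-inverse case where the unfolding along mode $2$ picks up $Q^{\top}$ acting on the left of $A_{(2)}$ together with Kronecker factors on the right. I see no gap.
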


\section{ Minimum rank and maximum rank of tensors}
For $\mathcal{A}=(a_{i_1\cdots i_k})\in\mathbb{R}^{n_1\times\cdots\times n_k}$, let $\mathcal{A}^{\top(p,q)}=(a'_{i_1\cdots i_{p-1}i_qi_{p+1}\cdots i_{q-1}i_pi_{q+1}\cdots i_k})$ $(p,~q\in[k])$ be the $(p,q)$ transpose of $\mathcal{A}$, where $a'_{i_1\cdots i_{p-1}i_qi_{p+1}\cdots i_{q-1}i_pi_{q+1}\cdots i_k}=a_{i_1\cdots i_p\cdots i_q\cdots i_k}~(i_j\in[n_j],~j=1,\ldots,k)$ (see \cite{ck5}).

The tensor $\mathcal{A}=(a_{i_1\cdots i_k})$ can be written as
\[
\mathcal{A}=\sum_{i_k=1}^{n_k}\sum_{i_{k-1}}^{n_{k-1}}\cdots\sum_{i_1=1}^{n_1}a_{i_1\cdots i_k}e_{i_1}^{(n_1)}\otimes\cdots\otimes e_{i_k}^{(n_k)},
\]
where $e_{i_j}^{(n_j)}$ is the dimension $n_j$ unit vector with $i_j$-th component being $1$, $i_j\in[n_j]$ ($j=1,\ldots,k$). Let $L_i\in\mathbb{R}^{c_i\times n_i}$ $(i=1,\ldots,k)$. Then
\begin{align}
\mathcal{B}=(L_1,\ldots,L_k)\cdot\mathcal{A}=\sum_{i_k=1}^{n_k}\sum_{i_{k-1}}^{n_{k-1}}\cdots\sum_{i_1=1}^{n_1}L_1e_{i_1}^{(n_1)}\otimes\cdots\otimes L_ke_{i_k}^{(n_k)}~(see~[8]).\label{xinequ1}
\end{align}
If $L_i$($i=1,2,\ldots,k$) is a permutation matrix or a diagonal matrix with diagonal entries $1$ or $-1$, by the above formula, it is clear that $(L_1,\ldots,L_k)\cdot\widetilde{\mathcal{A}}\in\mathcal{Q}(\mathcal{B})$ for each $\widetilde{\mathcal{A}}\in\mathcal{Q}(\mathcal{A})$. And the following result can be obtained.

\begin{thm}\label{thm3.1}
For an order $k$ real tensors $\mathcal{A}$. Then ${\rm mr}(\mathcal{A})={\rm mr}(\mathcal{B})$ and ${\rm Mr}(\mathcal{A})={\rm Mr}(\mathcal{B})$, if one of the following holds:

{\rm(1)}
${\rm sgn}(\mathcal{B})={\rm sgn}(\mathcal{A}^{\top(p,q)})$;

{\rm(2)} There exist some permutations matrices $P_1,\ldots,P_k$ such that ${\rm sgn}(\mathcal{B})=(P_1,\ldots,\\P_k)\cdot{\rm sgn}(\mathcal{A})$;

{\rm(3)} There exist some diagonal matrices $D_1,\ldots, D_k$ with diagonal elements $1$ or $-1$ such that ${\rm sgn}(\mathcal{B})=(D_1,\ldots,D_k)\cdot{\rm sgn}(\mathcal{A})$.

\end{thm}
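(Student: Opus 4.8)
\section*{Proof proposal}

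The plan is to show that in each of the three cases there is a rank-preserving bijection between the qualitative classes $\mathcal{Q}(\mathcal{A})$ and $\mathcal{Q}(\mathcal{B})$. Once this is established, the sets $\{{\rm rank}(\widetilde{\mathcal{A}}):\widetilde{\mathcal{A}}\in\mathcal{Q}(\mathcal{A})\}$ and $\{{\rm rank}(\widetilde{\mathcal{B}}):\widetilde{\mathcal{B}}\in\mathcal{Q}(\mathcal{B})\}$ coincide, and taking the minimum and the maximum of these two equal sets of integers immediately yields ${\rm mr}(\mathcal{A})={\rm mr}(\mathcal{B})$ and ${\rm Mr}(\mathcal{A})={\rm Mr}(\mathcal{B})$. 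So the whole argument reduces to producing the bijection.

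For case {\rm(1)}, I would first observe that for any order $k$ tensor $\mathcal{C}$ the assignment $\widetilde{\mathcal{C}}\mapsto\widetilde{\mathcal{C}}^{\top(p,q)}$ is a bijection from $\mathcal{Q}(\mathcal{C})$ onto $\mathcal{Q}(\mathcal{C}^{\top(p,q)})$, since ${\rm sgn}(\widetilde{\mathcal{C}}^{\top(p,q)})=\big({\rm sgn}(\widetilde{\mathcal{C}})\big)^{\top(p,q)}$ and $(\cdot)^{\top(p,q)}$ is an involution on the set of all order $k$ tensors. This map also preserves rank: if $\widetilde{\mathcal{C}}=\sum_{j=1}^r\alpha_1^j\otimes\cdots\otimes\alpha_k^j$ is a decomposition into $r$ rank one tensors, then interchanging the $p$-th and $q$-th factors in every summand gives a decomposition of $\widetilde{\mathcal{C}}^{\top(p,q)}$ into $r$ rank one tensors, so ${\rm rank}(\widetilde{\mathcal{C}}^{\top(p,q)})\le{\rm rank}(\widetilde{\mathcal{C}})$, and applying the same argument to $\widetilde{\mathcal{C}}^{\top(p,q)}$ forces equality. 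Taking $\mathcal{C}=\mathcal{A}$ and using that ${\rm sgn}(\mathcal{B})={\rm sgn}(\mathcal{A}^{\top(p,q)})$ gives $\mathcal{Q}(\mathcal{B})=\mathcal{Q}(\mathcal{A}^{\top(p,q)})$, the composition is the desired rank-preserving bijection $\mathcal{Q}(\mathcal{A})\to\mathcal{Q}(\mathcal{B})$.

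For cases {\rm(2)} and {\rm(3)}, write $L_i$ for $P_i$ in case {\rm(2)} and for $D_i$ in case {\rm(3)}; in either case each $L_i$ is a permutation matrix or a diagonal matrix with diagonal entries $1$ or $-1$, hence nonsingular, and $L_i^{-1}$ is again a matrix of the same type. By the remark following \eqref{xinequ1}, $(L_1,\ldots,L_k)\cdot\widetilde{\mathcal{A}}\in\mathcal{Q}(\mathcal{B})$ for every $\widetilde{\mathcal{A}}\in\mathcal{Q}(\mathcal{A})$, so $\phi(\widetilde{\mathcal{A}})=(L_1,\ldots,L_k)\cdot\widetilde{\mathcal{A}}$ defines a map $\mathcal{Q}(\mathcal{A})\to\mathcal{Q}(\mathcal{B})$. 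Since ${\rm sgn}(\mathcal{A})=(L_1^{-1},\ldots,L_k^{-1})\cdot{\rm sgn}(\mathcal{B})$, the same remark shows $(L_1^{-1},\ldots,L_k^{-1})\cdot\widetilde{\mathcal{B}}\in\mathcal{Q}(\mathcal{A})$ for every $\widetilde{\mathcal{B}}\in\mathcal{Q}(\mathcal{B})$, and because $(L_1^{-1},\ldots,L_k^{-1})\cdot\big((L_1,\ldots,L_k)\cdot\widetilde{\mathcal{A}}\big)=\widetilde{\mathcal{A}}$ (immediate from the definition of the multilinear transform), this is a two-sided inverse of $\phi$; thus $\phi$ is a bijection. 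Finally, as $L_1,\ldots,L_k$ are nonsingular, Lemma \ref{le2.1} gives ${\rm rank}(\phi(\widetilde{\mathcal{A}}))={\rm rank}(\widetilde{\mathcal{A}})$, so $\phi$ is rank-preserving and the reduction in the first paragraph finishes the proof.

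The argument is largely bookkeeping with qualitative classes. The one step that deserves a little care is the rank invariance under mode transposition in case {\rm(1)}: this is \emph{not} an instance of Lemma \ref{le2.1}, since that lemma concerns multilinear transforms $(L_1,\ldots,L_k)\cdot(\cdot)$ while $(\cdot)^{\top(p,q)}$ permutes the modes themselves; it has to be read off directly from the Segre outer product definition of ${\rm rank}$, as indicated above.
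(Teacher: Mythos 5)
Your proposal is correct and follows essentially the same route as the paper: rank invariance of the $(p,q)$ transpose is read off from the Segre decomposition, and cases (2) and (3) use the remark after \eqref{xinequ1} together with Lemma \ref{le2.1} applied to the nonsingular matrices $L_i$ and their inverses. The only cosmetic difference is that you package the two-sided inequality argument as an explicit rank-preserving bijection between $\mathcal{Q}(\mathcal{A})$ and $\mathcal{Q}(\mathcal{B})$, which is, if anything, slightly cleaner.
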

\begin{proof}
Making $(p,q)$ transpose to $\mathcal{A}$ is exchanging the vectors $\alpha_{p}^j$ and $\alpha_{q}^j$ in the (\ref{s1.1}).
It is easy to see that ${\rm rank}(\mathcal{A})={\rm rank}(\mathcal{A}^{\top(p,q)})$, ${\rm mr}(\mathcal{A})={\rm mr}(\mathcal{A}^{\top(p,q)})$ and ${\rm Mr}(\mathcal{A})={\rm Mr}(\mathcal{A}^{\top(p,q)})$. It yields that (1) holds.

Next we prove that (2) and (3) hold. Suppose that $\mathcal{B}=(P_1,\ldots,P_k)\cdot\mathcal{A}$, ${\rm mr}(\mathcal{A})=r_1$ and ${\rm mr}(\mathcal{B})=r_2$, then there exists a tensor $\widetilde{\mathcal{A}}\in\mathcal{Q}(\mathcal{A})$ such that ${\rm rank}(\widetilde{\mathcal{A}})=r_1$. According to (\ref{xinequ1}) and Lemma \ref{le2.1}, we have that $r_1={\rm rank}((P_1,\ldots,P_k)\cdot\widetilde{\mathcal{A}})$. Since $(P_1,\ldots,P_k)\cdot\widetilde{\mathcal{A}}\in\mathcal{Q}(\mathcal{B})$, we get ${\rm mr}(\mathcal{B})=r_2\leq r_1$. Similarly, we can get $r_2\geq r_1$. Therefore, $r_1=r_2$. That is ${\rm mr}(\mathcal{A})={\rm mr}(\mathcal{B})$. By the same method, it yields that ${\rm Mr}(\mathcal{A})={\rm Mr}(\mathcal{B})$.

\end{proof}

For the tensor $\mathcal{A}=(a_{i_1\cdots i_k})\in\mathbb{R}^{n_1\times\cdots\times n_k}$. Let
$$(\mathcal{A})_i^{(j)}=(a_{i_1\cdots i_{j-1}ii_{j+1}\cdots i_k})\in\mathbb{R}^{n_1\times\cdots\times n_{j-1}\times n_{j+1}\times\cdots\times n_k}$$
 is the subtensor of $\mathcal{A}$ obtained by fixing the $j$-th index to be $i$, where $j\in[k]$, $i\in[n_j]$. And it can be regarded as a slice of $\mathcal{A}$ by $j$-th order.
Unfolding $\mathcal{A}$ into the subtensors (slices) by $j$-th order gives that $\mathcal{A}=((\mathcal{A})_1^{(j)},\ldots,(\mathcal{A})_{n_j}^{(j)})$ (see \cite{ck7}).
Obviously, $(\mathcal{A}^{\top(1,p)})^{(1)}_i=(\mathcal{A})^{(p)}_i$ $(i=1,\ldots,n_p, ~p\in\{2,\ldots,k\}$).

\begin{thm}\label{thm3.2}
For the tensor $\mathcal{A} \in \mathbb{R}^{n_1\times \cdots \times n_k}$, let $\mathcal{A}=((\mathcal{A})_1^{(1)},\ldots,(\mathcal{A})_{n_1}^{(1)})$ be the $1$-th unfolded expression and $\mathcal{A}_1=((\mathcal{A})_1^{(1)},\ldots,(\mathcal{A})_{n_1-1}^{(1)})$.

{\rm (1)} If $(\mathcal{A})_{n_1}^{(1)}=0$, then ${\rm mr}(\mathcal{A})={\rm mr}(\mathcal{A}_1)$ and ${\rm Mr}(\mathcal{A})={\rm Mr}(\mathcal{A}_1)$;

{\rm (2)} If ${\rm sgn}((\mathcal{A})_{n_1}^{(1)})=c~{\rm sgn}((\mathcal{A})_{l}^{(1)})$, where $l\in [n_1-1]$, $c=1$ or $-1$, then ${\rm mr}(\mathcal{A})={\rm mr}(\mathcal{A}_1)$.
\end{thm}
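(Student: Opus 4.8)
The plan is to reduce each statement to a rank computation on qualitative classes by exploiting the unfolding $\mathcal{A}=((\mathcal{A})_1^{(1)},\ldots,(\mathcal{A})_{n_1}^{(1)})$ and the fact that a Segre decomposition of $\mathcal{A}$ restricts to Segre decompositions of the slices. For part (1), first I would observe that any decomposition $\mathcal{A}=\sum_{j=1}^r \alpha_1^j\otimes\cdots\otimes\alpha_k^j$ of a tensor $\widetilde{\mathcal{A}}\in\mathcal{Q}(\mathcal{A})$ with vanishing last slice induces, upon deleting the $n_1$-th coordinate of each $\alpha_1^j$, a decomposition of the corresponding element $\widetilde{\mathcal{A}}_1\in\mathcal{Q}(\mathcal{A}_1)$; this gives ${\rm rank}(\widetilde{\mathcal{A}}_1)\le{\rm rank}(\widetilde{\mathcal{A}})$ and hence ${\rm mr}(\mathcal{A}_1)\le{\rm mr}(\mathcal{A})$, ${\rm Mr}(\mathcal{A}_1)\le{\rm Mr}(\mathcal{A})$. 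Conversely, given $\widetilde{\mathcal{A}}_1\in\mathcal{Q}(\mathcal{A}_1)$, I would append a zero slice — equivalently, view $\widetilde{\mathcal{A}}_1$ as $(L_1,\ldots,L_k)\cdot\widetilde{\mathcal{A}}_1$ where $L_1\in\mathbb{R}^{n_1\times(n_1-1)}$ is the inclusion $[I\mid 0]^\top$ padding with a zero row and $L_2=\cdots=L_k=I$, obtaining a tensor in $\mathcal{Q}(\mathcal{A})$ of the same rank by (\ref{xinequ1}) and Lemma \ref{le2.1}. This yields the reverse inequalities and hence equality for both ${\rm mr}$ and ${\rm Mr}$.

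For part (2), the hypothesis ${\rm sgn}((\mathcal{A})_{n_1}^{(1)})=c\,{\rm sgn}((\mathcal{A})_l^{(1)})$ with $c\in\{1,-1\}$ says the last slice is a sign multiple of the $l$-th. I would argue ${\rm mr}(\mathcal{A})\le{\rm mr}(\mathcal{A}_1)$ first: take $\widetilde{\mathcal{A}}_1\in\mathcal{Q}(\mathcal{A}_1)$ with minimum rank $r$ and slices $S_1,\ldots,S_{n_1-1}$; define $\widetilde{\mathcal{A}}$ to have these same slices together with an $n_1$-th slice equal to $c\,S_l$. Then ${\rm sgn}$ of the new slice is $c\,{\rm sgn}(S_l)=c\,{\rm sgn}((\mathcal{A})_l^{(1)})={\rm sgn}((\mathcal{A})_{n_1}^{(1)})$, so $\widetilde{\mathcal{A}}\in\mathcal{Q}(\mathcal{A})$; and $\widetilde{\mathcal{A}}=(L_1,\ldots,L_k)\cdot\widetilde{\mathcal{A}}_1$ with $L_1$ the $(n_1)\times(n_1-1)$ matrix equal to the identity on the first $n_1-1$ rows and with $n_1$-th row $c\,e_l^\top$, so Lemma \ref{le2.1} gives ${\rm rank}(\widetilde{\mathcal{A}})\le r$. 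For the reverse inequality ${\rm mr}(\mathcal{A}_1)\le{\rm mr}(\mathcal{A})$, take $\widetilde{\mathcal{A}}\in\mathcal{Q}(\mathcal{A})$ of minimum rank; deleting the last slice is the transform $(M_1,I,\ldots,I)\cdot\widetilde{\mathcal{A}}$ with $M_1=[I_{n_1-1}\mid 0]$, giving a tensor in $\mathcal{Q}(\mathcal{A}_1)$ of rank $\le{\rm mr}(\mathcal{A})$. Combining the two directions gives ${\rm mr}(\mathcal{A})={\rm mr}(\mathcal{A}_1)$.

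The main obstacle I anticipate is the bookkeeping in the "append a dependent slice" construction: I must verify that the multilinear transform $(L_1,I,\ldots,I)\cdot\widetilde{\mathcal{A}}_1$ really reproduces the intended slices (this follows by checking the action of $L_1$ on the $1$-th index via (\ref{xinequ1}), since $L_1 e_i^{(n_1-1)}$ equals $e_i^{(n_1)}$ for $i<n_1$ and the extra row contributes $c$ in position $n_1$ when $i=l$), and that the sign pattern of the result is exactly ${\rm sgn}(\mathcal{A})$ — here the delicate point is that slices other than the $l$-th and the new $n_1$-th are unaffected, and the new slice's sign is forced to match by hypothesis, with no cancellation issues because $c=\pm1$ scales entrywise. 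Note also that part (2) is stated only for ${\rm mr}$, not ${\rm Mr}$; indeed the padding map $L_1=[I\mid 0]^\top$ used in part (1) preserves rank exactly under Lemma \ref{le2.1} only because its columns are independent, whereas the $L_1$ in part (2) that recovers $\widetilde{\mathcal{A}}$ from $\widetilde{\mathcal{A}}_1$ is injective (full column rank) so it does not decrease rank, but the deletion map is not injective, so one only gets one-sided control of the maximum rank — this explains why the theorem omits the ${\rm Mr}$ conclusion in case (2), and I would remark on this rather than attempt to prove a false statement.
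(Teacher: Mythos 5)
Your proof is correct and takes essentially the same route as the paper: both parts are handled by realizing the slice operations (truncation, zero-padding, appending the dependent slice $c(\mathcal{A})_l^{(1)}$) as multilinear transforms $(L_1,I,\ldots,I)\cdot(\,\cdot\,)$ and invoking Lemma \ref{le2.1}, the only cosmetic difference being that in part (2) the paper first forms the tensor whose last slice is $c(\mathcal{A})_l^{(1)}$ and then annihilates it with an invertible elementary matrix $P$ so as to quote part (1), whereas you append the slice directly with a rectangular $L_1$ and pair this with the truncation inequality. Your closing observation that the ${\rm Mr}$ conclusion genuinely fails in case (2) is also accurate.
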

\begin{proof}
(1) Let $r={\rm rank}(\mathcal{A})$. According to (\ref{s1.1}),
\[\mathcal{A}=\sum_{j=1}^r\alpha_{1}^j\otimes\cdots\otimes\alpha_{k}^j.\]
Let $\alpha_{1}^j=\left( {\begin{array}{*{20}c}
   \beta_{1}^j  \\
   (\alpha_{1}^j)_{n_1}   \\
\end{array}} \right),\ $ where $\beta_{1}^j\in \mathbb{R}^{n_1-1}$ and $(\alpha_{1}^j)_{n_1} $ is the $n_1$-th component of $\alpha_{1}^j$, $j=1,\ldots,r$. Since $(\mathcal{A})_{n_1}^{(1)}=0$, it yields that
\[
\mathcal{A}=\sum_{j=1}^r\left( {\begin{array}{*{20}c}
   \beta_{1}^j  \\
   0   \\
\end{array}} \right)\otimes\cdots\otimes\alpha_{k}^j.
\]
Therefore, $\mathcal{A}_1=\sum_{j=1}^r
   \beta_{1}^j\otimes\cdots\otimes\alpha_{k}^j$, so ${\rm rank}(\mathcal{A})={\rm rank}(\mathcal{A}_1)$, ${\rm mr}(\mathcal{A})={\rm mr}(\mathcal{A}_1)$ and ${\rm Mr}(\mathcal{A})={\rm Mr}(\mathcal{A}_1)$.
Thus, we get (1) holds.

(2) Without loss of generality, suppose that $l=1$. Let vector
\[
\gamma^j=((\alpha_{1}^j)_1,\ldots,(\alpha_{1}^j)_{n_1-1},c(\alpha_{1}^j)_1)^\top\ (j=1,\ldots,r, c=1~or~-1),\]
 where $(\alpha_{1}^j)_k$ ($k\in[n_1-1]$) is the $k$-th component of $\alpha_{1}^j$. Let
\begin{align*}
\mathcal{B}&=\sum_{j=1}^r\gamma^j \otimes\alpha_{2}^j\otimes\cdots\otimes\alpha_{k}^j
=((\mathcal{A})_1^{(1)},\ldots,(\mathcal{A})_{n_1-1}^{(1)},
c(\mathcal{A})_1^{(1)}).
\end{align*}
Since ${\rm sgn}((\mathcal{A})_{n_1}^{(1)})=c~{\rm sgn}((\mathcal{A})_{1}^{(1)})$, we obtain $\mathcal{B}\in\mathcal{Q}(\mathcal{A})$, so ${\rm mr}(\mathcal{B})={\rm mr}(\mathcal{A})$.
Take matrix $P=\left( {\begin{array}{*{20}c}
   1 & 0 & \cdots & 0  \\
   0 & 1 & \cdots & 0 \\
   \vdots & \vdots & \ddots & \vdots\\
   -c & 0 & \cdots & 1
\end{array}} \right)\in\mathbb{R}^{n_1\times n_1}$. Then we get
\[
P\gamma^j=((\alpha_{1}^j)_1,\ldots,(\alpha_{1})^j_{n_1-1},0)^\top
\]
for each $j\in[r]$. Let $\mathcal{C}=(P,I,\ldots,I)\cdot\mathcal{B}$, then
\[
\mathcal{C}=((\mathcal{A})_1^{(1)},\ldots,(\mathcal{A})_{n_1-1}^{(1)},0).
\]
By (1) of Theorem \ref{thm3.2}, it yields that ${\rm rank}(\mathcal{C})={\rm rank}(\mathcal{A}_1)$ and ${\rm mr}(\mathcal{A})={\rm mr}(\mathcal{A}_1)$.
So it follows from Lemma \ref{le2.1} that ${\rm rank}(\mathcal{B})={\rm rank}(\mathcal{C})={\rm rank}(\mathcal{A}_1)$.
Hence, ${\rm mr}(\mathcal{A}_1)={\rm mr}(\mathcal{B})={\rm mr}(\mathcal{A})$. Thus, we get (2) holds.
\end{proof}

By deleting the rows (columns) whose sign patterns are zero or the same (opposite) to other rows (columns), \cite{ck15} give the conception of the sign condensed matrix and use it to characterise the minimum rank of a matrix. Similarly, we give the conception of the condensed tensor and use it to characterise the minimum rank of a tensor.

We condense a tensor $\mathcal{A}\in \mathbb{R}^{n_1\times \cdots\times n_k}$ by deleting the slices whose sign patterns are zero or the same (opposite) to other slices in $i$-th order unfolded expression $\mathcal{A}=((\mathcal{A})_1^{(i)},\ldots,(\mathcal{A})_{n_i}^{(i)})$ ($i=1,\cdots,k$). The condensed tensor of $\mathcal{A}$ is gotten by doing this deletion form $1$-th to $k$-th order unfolded expression of $\mathcal{A}$, denoted by $\mathcal{C}(\mathcal{A})$.

From Theorem \ref{thm3.2}, we can see that the minimum rank of a tensor is unchanged after doing this deletion.
Obviously, the minimum {\rm rank} of $\mathcal{C}(\mathcal{A})$ and $\mathcal{A}$ are the same, that is ${\rm mr}(\mathcal{A})={\rm mr}(\mathcal{C}(\mathcal{A}))$.

There are some results on the sufficient and necessary conditions for the minimum rank of matrices to be $1$ or $2$ (see [15, 16]). Next, we give some results on the tensor whose minimum rank is $1$.

\begin{thm}\label{thm3.6}
Let $\mathcal{A}\in\mathbb{R}^{n_1\times\cdots\times n_k}$. Then ${\rm mr}(\mathcal{A})=1$ if and only if ${\rm sgn}(\mathcal{C}(\mathcal{A}))=+\ \mbox{or}\ -.$
\end{thm}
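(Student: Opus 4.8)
The plan is to pass immediately to the condensed tensor $\mathcal{C}(\mathcal{A})$, using the identity ${\rm mr}(\mathcal{A})={\rm mr}(\mathcal{C}(\mathcal{A}))$ noted above, and to exploit that $\mathcal{C}(\mathcal{A})$ is \emph{fully condensed}: it has no zero slice in any order, and in every order no two distinct slices have equal or opposite sign pattern (if such a slice remained we could delete it by Theorem \ref{thm3.2} without changing the minimum rank). So it suffices to prove that ${\rm mr}(\mathcal{C}(\mathcal{A}))=1$ if and only if ${\rm sgn}(\mathcal{C}(\mathcal{A}))$ is the single symbol $+$ or $-$.

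The crux is an elementary fact about parallel slices of a rank-one tensor. If $\mathcal{B}=\alpha_1\otimes\cdots\otimes\alpha_k$ with every $\alpha_i\neq0$, then the slice of $\mathcal{B}$ obtained by fixing the $i$-th index to $j$ is exactly $(\alpha_i)_j\,\mathcal{D}$, where $\mathcal{D}=\alpha_1\otimes\cdots\otimes\alpha_{i-1}\otimes\alpha_{i+1}\otimes\cdots\otimes\alpha_k\neq0$. Hence, for fixed $i$, all slices of $\mathcal{B}$ in order $i$ are scalar multiples of the one nonzero tensor $\mathcal{D}$; consequently any two of them have equal sign pattern, opposite sign pattern, or (exactly when the corresponding component of $\alpha_i$ vanishes) one of them is the zero tensor.

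For necessity, assume ${\rm mr}(\mathcal{C}(\mathcal{A}))=1$. The minimum is attained, so there is a rank-one tensor $\mathcal{B}\in\mathcal{Q}(\mathcal{C}(\mathcal{A}))$, say $\mathcal{B}=\alpha_1\otimes\cdots\otimes\alpha_k$. Since ${\rm sgn}(\mathcal{B})={\rm sgn}(\mathcal{C}(\mathcal{A}))$, the slices of $\mathcal{C}(\mathcal{A})$ in any fixed order have the structure just described. If some order had two distinct indices, the corresponding slices of $\mathcal{C}(\mathcal{A})$ would be equal, opposite, or one would be zero, contradicting full condensedness. Hence every dimension of $\mathcal{C}(\mathcal{A})$ equals $1$, so $\mathcal{C}(\mathcal{A})$ is a $1\times\cdots\times1$ tensor, and it is nonzero because its rank is $1$; therefore ${\rm sgn}(\mathcal{C}(\mathcal{A}))$ is $+$ or $-$. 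Conversely, if ${\rm sgn}(\mathcal{C}(\mathcal{A}))$ is $+$ or $-$ and $\mathcal{C}(\mathcal{A})$ had two slices in some order, they would both carry the identical all-$+$ or all-$-$ sign pattern, again contradicting full condensedness; so $\mathcal{C}(\mathcal{A})$ is a $1\times\cdots\times1$ tensor with one nonzero entry, which has rank $1$, and ${\rm mr}(\mathcal{A})={\rm mr}(\mathcal{C}(\mathcal{A}))={\rm rank}(\mathcal{C}(\mathcal{A}))=1$.

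I expect the slice bookkeeping to be routine; the only delicate point is the interaction between full condensedness and the proportionality of parallel slices of a rank-one tensor, which is exactly what collapses $\mathcal{C}(\mathcal{A})$ to a single entry and drives both implications. It is also worth recording the trivial case $\mathcal{A}=0$, where ${\rm mr}(\mathcal{A})=0\neq1$ and $\mathcal{C}(\mathcal{A})=0$, so both sides fail and the equivalence is consistent there as well.
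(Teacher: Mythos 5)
Your proof is correct and takes essentially the same route as the paper's: both reduce to the condensed tensor via ${\rm mr}(\mathcal{A})={\rm mr}(\mathcal{C}(\mathcal{A}))$ and exploit that parallel slices of a rank-one tensor $\alpha_1\otimes\cdots\otimes\alpha_k$ are all scalar multiples of one fixed tensor, so condensation collapses any rank-one representative of the sign class to a single nonzero entry. The paper computes $\mathcal{C}(\alpha_1\otimes\cdots\otimes\alpha_k)$ directly, order by order, rather than arguing by contradiction with full condensedness, but the content is the same.
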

\begin{proof}
According to above discussion, the sufficiency is obvious.

Next we prove the necessity. Since that ${\rm mr}(\mathcal{A})=1$, then there exists a tensor $\mathcal{A}_1\in\mathcal{Q}(\mathcal{A})$ such that ${\rm rank}{(\mathcal{A}}_1)=1$. Thus,
$\mathcal{A}_1$ can be written as the rank one form as $\mathcal{A}_1=\alpha_1\otimes\cdots\otimes\alpha_k$, where $\alpha_i\neq0$ and $\alpha_i\in\mathbb{R}^{n_i}$ $(i=1,\ldots,k)$.
Deleting the slices of $\mathcal{A}_1$ whose sign patterns are zero or the same (opposite) to other slices, by the $1$-th order (considering the $1$-th vector $\alpha_1$ ), it yields that
\[
\mathcal{C}(\mathcal{A}_1)=\mathcal{C}(\alpha_1\otimes\alpha_2\otimes\cdots\otimes\alpha_k)=\mathcal{C}(a_1\alpha_2\otimes\cdots\otimes\alpha_k),
\]
where $a_1$ is the nonzero element of $\alpha_1$. Next, doing the deletion by the $2$-th order (considering the $2$-th vector $\alpha_2$ ), we get
\[
\mathcal{C}(\mathcal{A}_1)=\mathcal{C}(a_1a_2\alpha_3\otimes\cdots\otimes\alpha_k),
\]
where $a_2$ is the nonzero element of $\alpha_2$.  Doing the deletion by all the other orders, we obtain
\[
\mathcal{C}(\mathcal{A}_1)=a_1a_2\cdots a_k\neq0,
\]
where $a_i$ is the nonzero element of $\alpha_i$ $(i=3,\ldots,k)$. Obviously, ${\rm sgn}(\mathcal{C}(\mathcal{A})_1)=+\ \mbox{or}\ -$. Note that $\mathcal{A}_1\in \mathcal{Q}(\mathcal{A})$,
then ${\rm sgn(\mathcal{C}(\mathcal{A})_1)}={\rm sgn(\mathcal{C}(\mathcal{A}))}=+\ \mbox{or}\ -$.  Hence, the necessity holds.
\end{proof}

For $\mathcal{A}\in \mathbb{R}^{n_1\times\ldots\times n_k}$, let $\gamma_i$ be a non-empty subset of the set $[n_i]$ and $\mathcal{A}[\gamma_1,\ldots,\gamma_k]$ be a subtensor of $\mathcal{A}$ obtained by deleting the elements whose $i$-th indices are not in $\gamma_i$, $i=1,\ldots,k$, then $\mathcal{A}[\gamma_1,\ldots,\gamma_k]=(b_{i_1\cdots i_k})\in\mathbb{R}^{m_1\times\cdots\times m_k}$, where $m_i$ denotes the number of element of set $\gamma_i$, $i=1,\ldots,k$.

In the following theorem, the relation of the minimum (maximum) rank between a tensor and its subtensors is given.

\begin{thm}\label{thm3.7}
For $\mathcal{A}\in\mathbb{R}^{n_1\times\cdots\times n_k}$, let $\gamma_j$ be a subset of the set $[n_j]$ $(j=1,\ldots,k)$ and $\mathcal{B}=\mathcal{A}[\gamma_1,\ldots,\gamma_k]$ is a subtensor of $\mathcal{A}$, then ${\rm mr}(\mathcal{A})\geq{\rm mr}(\mathcal{B})$ and ${\rm Mr}(\mathcal{A})\geq{\rm Mr}(\mathcal{B})$.
\end{thm}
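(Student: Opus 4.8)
The plan is to reduce the tensor statement to the behavior of rank under the multilinear transform, exactly as in Lemma~\ref{le2.1}. The key observation is that passing to the subtensor $\mathcal{B}=\mathcal{A}[\gamma_1,\ldots,\gamma_k]$ is nothing but applying a multilinear transform $(L_1,\ldots,L_k)\cdot\mathcal{A}$, where each $L_j\in\mathbb{R}^{m_j\times n_j}$ is the $0/1$ selection matrix whose rows are the standard basis vectors $e_i^{(n_j)}$ for $i\in\gamma_j$. Indeed, $(L_1,\ldots,L_k)\cdot\mathcal{A}$ has entries $\sum (L_1)_{j_1 i_1}\cdots(L_k)_{j_k i_k}a_{i_1\cdots i_k}$, and since each $L_j$ picks out a single index, this equals $a_{i_1\cdots i_k}$ restricted to $i_j\in\gamma_j$, i.e.\ precisely $\mathcal{B}$. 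So the first step is to record this identification.

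Next I would handle the minimum-rank inequality. Let $r={\rm mr}(\mathcal{A})$, so there is $\widetilde{\mathcal{A}}\in\mathcal{Q}(\mathcal{A})$ with ${\rm rank}(\widetilde{\mathcal{A}})=r$. Form $\widetilde{\mathcal{B}}=(L_1,\ldots,L_k)\cdot\widetilde{\mathcal{A}}$. By the identification above, $\widetilde{\mathcal{B}}$ is the corresponding subtensor of $\widetilde{\mathcal{A}}$; since selecting a subset of indices does not change the sign of any surviving entry, ${\rm sgn}(\widetilde{\mathcal{B}})={\rm sgn}(\mathcal{B})$, hence $\widetilde{\mathcal{B}}\in\mathcal{Q}(\mathcal{B})$. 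By Lemma~\ref{le2.1}, ${\rm rank}(\widetilde{\mathcal{B}})\leq{\rm rank}(\widetilde{\mathcal{A}})=r$. Therefore ${\rm mr}(\mathcal{B})\leq{\rm rank}(\widetilde{\mathcal{B}})\leq r={\rm mr}(\mathcal{A})$.

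For the maximum-rank inequality the argument is symmetric but starts from $\mathcal{B}$: let $R={\rm Mr}(\mathcal{B})$ and pick $\widehat{\mathcal{B}}\in\mathcal{Q}(\mathcal{B})$ with ${\rm rank}(\widehat{\mathcal{B}})=R$. The point is to lift $\widehat{\mathcal{B}}$ to some $\widehat{\mathcal{A}}\in\mathcal{Q}(\mathcal{A})$ whose $\mathcal{A}[\gamma_1,\ldots,\gamma_k]$-block is $\widehat{\mathcal{B}}$: simply put the entries of $\widehat{\mathcal{B}}$ in the positions indexed by $\gamma_1\times\cdots\times\gamma_k$ and fill every other entry of $\widehat{\mathcal{A}}$ with any real number having the sign prescribed by ${\rm sgn}(\mathcal{A})$ in that position (this is possible since ${\rm sgn}$ only demands a sign, never a specific value). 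Then $\widehat{\mathcal{A}}\in\mathcal{Q}(\mathcal{A})$ and $(L_1,\ldots,L_k)\cdot\widehat{\mathcal{A}}=\widehat{\mathcal{B}}$, so again by Lemma~\ref{le2.1} we get ${\rm rank}(\widehat{\mathcal{A}})\geq{\rm rank}(\widehat{\mathcal{B}})=R$, whence ${\rm Mr}(\mathcal{A})\geq{\rm rank}(\widehat{\mathcal{A}})\geq R={\rm Mr}(\mathcal{B})$.

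The only mildly delicate point, and the one I would state carefully, is the two "which direction do we start from" choices: for ${\rm mr}$ we push a minimizer of $\mathcal{A}$ down to $\mathcal{B}$, while for ${\rm Mr}$ we lift a maximizer of $\mathcal{B}$ up to $\mathcal{A}$; in each case Lemma~\ref{le2.1} supplies the needed one-sided rank inequality ${\rm rank}((L_1,\ldots,L_k)\cdot\mathcal{X})\leq{\rm rank}(\mathcal{X})$, and the sign-pattern class membership is immediate because restricting or extending an array never alters the signs of the entries common to both. There is no real obstacle beyond bookkeeping; the substance is the single sentence "a subtensor is a multilinear transform by $0/1$ selection matrices," after which the result is a corollary of Lemma~\ref{le2.1}.
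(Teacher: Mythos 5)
Your proof is correct and follows essentially the same route as the paper: push a rank-minimizing representative of $\mathcal{Q}(\mathcal{A})$ down to the subtensor for ${\rm mr}$, and lift a rank-maximizing representative of $\mathcal{Q}(\mathcal{B})$ up to a full tensor for ${\rm Mr}$. The paper simply restricts the vectors $\alpha_i^j$ to $\gamma_i$ in the rank-one decomposition directly rather than phrasing the restriction as a multilinear transform and citing Lemma~\ref{le2.1}, but that is the same mechanism.
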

\begin{proof}
Without loss of generality, suppose that $\mathcal{A}\neq 0$. Let ${\rm mr}(\mathcal{A})=r_1$ and ${\rm mr}(\mathcal{B})=r_2$, then there exist a tensor $\widetilde{\mathcal{A}}\in\mathcal{Q}(\mathcal{A})$ and nonzero vectors $\alpha_{i}^j\in\mathbb{R}^{n_i}$ $(i=1,\ldots,k,~j=1,\ldots,r_1)$ such that
\[
\widetilde{\mathcal{A}}=\sum_{j=1}^{r_1}\alpha_{1}^j\otimes\cdots\otimes\alpha_{k}^j.
\]
Let $\widetilde{\mathcal{B}}=\sum_{j=1}^{r_1}\alpha_{1}^j[\gamma_1]\otimes\cdots\otimes\alpha_{k}^j[\gamma_k]$. Obviously, $\widetilde{\mathcal{B}}\in\mathcal{Q}(\mathcal{B})$, so $r_1\geq r_2$.

Let ${\rm Mr}(\mathcal{B})=c_1$. Then there exists a tensor $\widetilde{\mathcal{B}}\in\mathcal{Q}(\mathcal{B })$ such that ${\rm rank}(\widetilde{\mathcal{B}})=c_1$.
Take a tensor $\widetilde{\mathcal{A}}\in\mathcal{Q}(\mathcal{A})$ satisfying $\widetilde{\mathcal{A}}[\gamma_1,\ldots,\gamma_k]=\widetilde{\mathcal{B}}$. Suppose that ${\rm rank}(\widetilde{\mathcal{A}})=c_2$, then there exist nonzero vectors $\alpha_{i}^j$ $(i=1,\ldots,k,~j=1,\ldots,c_2)$ such that
\[
\widetilde{\mathcal{A}}=\sum_{j=1}^{c_2}\alpha_{1}^j\otimes\cdots\otimes\alpha_{k}^j.
\]
It is easy to see that
\[
\widetilde{\mathcal{B}}=\sum_{j=1}^{c_2}\alpha_{1_j}[\gamma_1]\otimes\cdots\otimes\alpha_{k_j}[\gamma_k],
\]
so $c_2\geq c_1$. Therefore, ${\rm Mr}(\mathcal{A})\geq c_2\geq c_1={\rm Mr}(\mathcal{B})$. Thus, the theorem holds.
\end{proof}

\section{Sign  nonsingular tensor}
Let $\mathcal{A}$ be the order $k$ tensor as in (\ref{s1.1}). The vector in $V_i(\mathcal{A})$ ($i\in[k]$) is the linear combinations of $\alpha_{i}^1,\ldots,\alpha_{i}^r$, so
$$
r_i(\mathcal{A})={\rm rank}(M_i)\leq\min\{n_1, r\}
,$$ where $M_i=(\alpha_{i}^1~\cdots~\alpha_{i}^r)\in\mathbb{R}^{n_i\times r}$.
If ${\rm mr}_{i}(\mathcal{A})<n_i$, then there exists an invertible matrix $P_i$ such that $P_iM_i$ has at least one zero row.

Let $\mathcal{B}=(I,\ldots, I, P_i, I,\ldots,I)\cdot\mathcal{A}$. Unfolding $\mathcal{B}$ into slices by $i$-th order gives that

\begin{align}\label{equ 4}
 {\cal B} &= (I, \ldots ,I,P_i ,I, \ldots ,I)\cdot{\cal A} \notag \\
  &= \sum_{j=1}^r I \alpha _1^j  \otimes  \cdots  \otimes I\alpha _{i - 1}^j  \otimes P_i \alpha _i^j  \otimes I\alpha _{i + 1}^j  \otimes  \cdots  \otimes I\alpha _k^j  \\
 &= (({\cal B})_1^{(i)} , \ldots ,({\cal B})_{n_i  - 1}^{(i)} ,0).\notag
\end{align}

 Let $\mathcal{T}=(Q_1,\ldots,Q_{i-1},P_i,Q_{i+1},\ldots,Q_k)\cdot\mathcal{A}$, where $Q_j$ is an invertible matrix ($j\in[k]$, $j\neq i$). Unfolding $\mathcal{T}$ into slices by $i$-th order gives that
\begin{eqnarray}\label{equ 5}
\mathcal{T}=(Q_1,\ldots,Q_{i-1},P_i,Q_{i+1},\ldots,Q_k)\cdot\mathcal{A}=((\mathcal{T})_1^{(i)},\ldots,(\mathcal{T})_{n_i-1}^{(i)},0).
\end{eqnarray}

If $A\in \mathbb{R}^{n\times n}$ is a sign nonsingular matrix, then ${\rm mr}(A)={\rm Mr}(A)=n$. For a tensor $\mathcal{A}$, the following result can be gotten.

\begin{thm}\label{thm3.14}
For an order $k$ tensor $\mathcal{A}\in\mathbb{R}^{n\times\cdots\times n}$. If $\mathcal{A}$ is a sign nonsingular tensor, then ${\rm rank}_\boxplus(\mathcal{\widetilde{A}})=(n,\ldots,n)$ for each $\widetilde{\mathcal{A}}\in \mathcal{Q}(\mathcal{A})$ and ${\rm mr}(\mathcal{A})\geq n$.
\end{thm}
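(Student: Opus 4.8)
The plan is to reduce everything to a single \emph{claim}: if $\mathcal{B}$ is a nonsingular order $k$ dimension $n$ tensor, then $r_i(\mathcal{B})=n$ for every $i\in[k]$. Granting the claim, the theorem follows at once. Since $\mathcal{A}$ is $SNS$, every $\widetilde{\mathcal{A}}\in\mathcal{Q}(\mathcal{A})$ is nonsingular, so the claim gives ${\rm rank}_\boxplus(\widetilde{\mathcal{A}})=(r_1(\widetilde{\mathcal{A}}),\ldots,r_k(\widetilde{\mathcal{A}}))=(n,\ldots,n)$. Furthermore, by Lemma \ref{le2.3}, ${\rm rank}(\widetilde{\mathcal{A}})\ge{\rm max}\{r_i(\widetilde{\mathcal{A}}):i\in[k]\}=n$ for each such $\widetilde{\mathcal{A}}$, and therefore ${\rm mr}(\mathcal{A})={\rm min}\{{\rm rank}(\widetilde{\mathcal{A}}):\widetilde{\mathcal{A}}\in\mathcal{Q}(\mathcal{A})\}\ge n$.

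To prove the claim I would argue by contraposition: assume $r_i(\mathcal{B})<n$ for some $i$ and produce a nonzero $x$ with $\mathcal{B}x^{k-1}=0$, which forces ${\rm det}(\mathcal{B})=0$ because the determinant vanishes exactly when this system has a nonzero solution. Since $r_i(\mathcal{B})={\rm dim}({\rm span}(V_i(\mathcal{B})))<n$, the subspace ${\rm span}(V_i(\mathcal{B}))$ is proper, so there is a nonzero $u=(u_1,\ldots,u_n)^\top$ annihilating every vector of $V_i(\mathcal{B})$, that is,
\[
\sum_{l=1}^{n}u_l\,b_{d_1\cdots d_{i-1}\,l\,d_{i+1}\cdots d_k}=0\qquad\mbox{for all }d_j\in[n],\ j\neq i.
\]
If $i\ge 2$, I substitute $x=u$ into $\mathcal{B}x^{k-1}$ and regroup: for each fixed $j_1$ and each fixed choice of the indices $j_s$ with $s\in\{2,\ldots,k\}\setminus\{i\}$, the inner sum $\sum_{j_i}b_{j_1\cdots j_i\cdots j_k}u_{j_i}$ vanishes by the display, so $(\mathcal{B}u^{k-1})_{j_1}=0$ for all $j_1$ and $u$ is the desired nonzero solution. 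If $i=1$, the display instead says that the $n$ homogeneous polynomials $g_{j_1}(x):=(\mathcal{B}x^{k-1})_{j_1}$, each of degree $k-1$, satisfy the nontrivial linear relation $\sum_{j_1}u_{j_1}g_{j_1}\equiv 0$; choosing $t$ with $u_t\neq 0$, the system $\mathcal{B}x^{k-1}=0$ is equivalent to the subsystem $\{g_{j_1}=0:j_1\neq t\}$ of just $n-1$ homogeneous polynomials of degree $k-1$ in $n$ variables, which by Lemma \ref{le2.4} has a nonzero solution. In either case ${\rm det}(\mathcal{B})=0$, contradicting nonsingularity, and the claim is proved.

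The only genuinely delicate point is the asymmetry, built into the definition of $\det$ via $\mathcal{B}x^{k-1}=0$, between the ``output'' index (mode $1$) and the symmetric ``input'' indices (modes $2,\ldots,k$): a rank deficiency in an input mode hands over an eigenvector-like nonzero solution directly, whereas a deficiency in mode $1$ only makes the defining polynomial system linearly dependent, so one must invoke Lemma \ref{le2.4} (fewer homogeneous equations than unknowns) to extract a nonzero solution. For the $i=1$ case one could alternatively set $\mathcal{B}'=(P,I,\ldots,I)\cdot\mathcal{B}=P\cdot\mathcal{B}$ for an invertible matrix $P$ chosen so that $\mathcal{B}'$ has a vanishing slice by mode $1$; then Lemma \ref{le2.5} gives ${\rm det}(\mathcal{B}')=({\rm det}\,P)^{(k-1)^{n-1}}{\rm det}(\mathcal{B})$, and Lemma \ref{le2.4} applied to the $n-1$ nontrivial equations of $\mathcal{B}'x^{k-1}=0$ forces ${\rm det}(\mathcal{B}')=0={\rm det}(\mathcal{B})$. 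Everything else — the reduction from the $SNS$ hypothesis and the final step through Lemma \ref{le2.3} — is routine.
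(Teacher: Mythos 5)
Your proof is correct, and it shares the paper's overall skeleton: argue by contradiction from $r_i(\mathcal{B})<n$, split into the mode-$1$ case versus the modes $2,\ldots,k$, invoke Lemma \ref{le2.4} for mode $1$, and finish with Lemma \ref{le2.3}. Where you genuinely diverge is in the execution. The paper first applies a multilinear transform $(I,\ldots,P_i,\ldots,I)$ (or $(I,P^{\top},\ldots,P^{\top})$) to zero out a slice of the tensor, finds a nonzero solution of the \emph{transformed} system (the last coordinate vector, or via Lemma \ref{le2.4}), and then must pull $\det=0$ back to the original tensor through the determinant product formula, Lemma \ref{le2.5}. You instead work directly with the original tensor: for $i\ge 2$ the annihilating vector $u\in({\rm span}\,V_i(\mathcal{B}))^{\perp}$ is itself an explicit nonzero solution of $\mathcal{B}x^{k-1}=0$, and for $i=1$ the relation $\sum_{j}u_{j}g_{j}\equiv 0$ lets you discard one equation and apply Lemma \ref{le2.4} to the remaining $n-1$. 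This buys you a shorter and more self-contained argument that never needs Lemma \ref{le2.5} (your parenthetical ``alternative'' for $i=1$ is precisely the paper's route); the two versions are of course related by the change of variables $x\mapsto Px$, but your direct form makes the mode-$1$ versus input-mode asymmetry, which you rightly flag as the only delicate point, much more transparent. The only caveat worth recording is that the solution produced by Lemma \ref{le2.4} may be complex, so one should read the determinant's vanishing criterion over $\mathbb{C}^{n}$, exactly as the paper itself does.
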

\begin{proof}
We prove this theorem by using contradiction. Suppose there exists a tensor $\mathcal{A}_1\in\mathcal{Q}(\mathcal{A})$ such that ${\rm rank}_\boxplus(\mathcal{A}_1)\neq(n,\ldots,n)$, then there exists an integer $i\in[k]$ such that ${\rm r}_i(\mathcal{A}_1)<n$.

If $i=1$, that is ${\rm r}_1(\mathcal{A}_1)<n$. From (\ref{equ 4}), it yields that there exists an invertible matrix $P$ such that
\[
\mathcal{B}=(P,I,\ldots, I)\cdot \mathcal{A}_1=P\cdot\mathcal{A}_1=((\mathcal{B})_1^{(1)},\ldots,(\mathcal{B})_{n-1}^{(1)},0).
\]
 It follows from Lemma \ref{le2.4} that the equation $\mathcal{B}x^{k-1}=0$ has a nonzero solution, where $x\in\mathbb{C}^n$. So ${\rm det}(\mathcal{B})=0$. By Lemma \ref{le2.5}, we get ${\rm det}(\mathcal{B})=({\rm det}(P))^{(k-1)^{n}}{\rm det}(\mathcal{A}_1)$, so ${\rm det}(\mathcal{A}_1)=0$. It is contradict to that $\mathcal{A}$ is a sign nonsingular tensor. Therefore, ${\rm r}_1(\widetilde{\mathcal{A}})=n$ for each $\widetilde{\mathcal{A}}\in \mathcal{Q}(\mathcal{A})$.

If $i\neq1$, without loss of generality, suppose that $i=2$. By above discussion and (\ref{equ 5}), there exists an invertible matrix $P$ such that
\begin{eqnarray}\label{equ 6}
\mathcal{B}=(I, P^\mathrm{T}, \ldots, P^\mathrm{T})\cdot \mathcal{A}_1=\mathcal{A}_1\cdot P=((B)_1^{(2)},\ldots,(B)_{n-1}^{(2)},0).
\end{eqnarray}

For the equation $\mathcal{B}x^{k-1}=0$, where $x=(x_1,x_2,\ldots,x_n)^\top\in\mathbb{C}^n$. By (\ref{equ 6}), it is easy to see that the coefficient of the term $x_n^{n-1}$ in $\mathcal{B}x^{k-1}$ is zero. Hence, the equation $\mathcal{B}x^{k-1}=0$ can be written as
\begin{align*}
\left\{ {\begin{array}{*{20}c}
   f_1(x_1,\ldots,x_{n-1})+x_{n}g_1(x) &= 0 , \\
   \vdots  \\
   f_n(x_1,\ldots,x_{n-1})+x_{n}g_n(x) &= 0,  \\
\end{array}} \right.
\end{align*}
where $f_i(x_1,\ldots,x_{n-1})$ $(i=1,\ldots,n)$ is a homogeneous polynomial with degree $k-1$; $g_i(x)$ $(i=1,\ldots,n)$ is a homogeneous polynomial with degree $k-2$; the degree of $x_{n}$ in $g_i(x)(i=1,\ldots,n)$ is not greater than $k-3$. So the equation $\mathcal{B}x^{k-1}=0$ exists a nonzero solution $x=(0,\ldots,0,x_{n})^\top$, where $x_{n}\neq0$. Hence, ${\rm det}(\mathcal{B})=0$. By Lemma \ref{le2.5}, we get ${\rm det}(\mathcal{B})={\rm det}(\mathcal{A}_1)({\rm det}(P))^{(k-1)^{n-1}}$, so ${\rm det}(\mathcal{A}_1)=0$. It is contradict to that $\mathcal{A}$ is a sign nonsingular tensor. So we get ${\rm r}_i(\mathcal{A}_1)=n$ ($i=1,\ldots,k$). Therefore, ${\rm rank}_\boxplus(\mathcal{A})=(n,\ldots,n)$ for each $\widetilde{\mathcal{A}}\in \mathcal{Q}(\mathcal{A})$. By Lemma \ref{le2.3}, we have ${\rm mr}(\mathcal{A})\geq n$.
\end{proof}
\textbf{Remark of Theorem \ref{thm3.14}}
There exist some sign nonsingular tensors $\mathcal{A}\in\mathbb{R}^{n\times\cdots\times n}$ such that ${\rm mr}(\mathcal{A})> n$. For example
the order $3$ dimension $2$ tensor $\mathcal{A}=(a_{i_1i_2i_3})$, where $a_{111}=2,~a_{112}=0,~a_{121}=0,~a_{122}=3,~a_{211}=0,~a_{212}=3,~a_{221}=0,~a_{222}=0$. For the equation $\mathcal{A}x^{2}=0$ as follows,
\begin{align*}
\left\{ {\begin{array}{*{20}c}
   2x_1^2+3x_2^2 &= 0,  \\
   3x_1x_2 &= 0 . \\
\end{array}} \right.
\end{align*}
It is easy to see that each tensor $\widetilde{\mathcal{A}}\in\mathcal{Q}(\mathcal{A})$ satisfying $\widetilde{\mathcal{A}}x=0$ has an unique solution, that is $x=(0,0)^\top$. So ${\rm det}(\mathcal{A})\neq0$, then $\mathcal{A}$ is a sign nonsingular tensor. According to ${\rm[8]}$,
we can get ${\rm rank}(\widetilde{\mathcal{A}})=3\geq 2$ holds for each tensor $\widetilde{\mathcal{A}}\in \mathcal{Q}(\mathcal{A})$.

For a real matrix $A$, \cite{ck15} showed that the maximum rank of $A$ is equal to the term rank of it, that is ${\rm Mr}(A)=\rho(A)$. In the following result, we give that the maximum rank of a tensor is not less than the term rank of it, that is ${\rm Mr}(\mathcal{A})\geq\rho(\mathcal{A})$.

\begin{thm}\label{thm3.15}
For a tensor $\mathcal{A}\in\mathbb{R}^{n_1\times\cdots\times n_k}$, ${\rm Mr}(\mathcal{A})\geq \rho(\mathcal{A})$.
\end{thm}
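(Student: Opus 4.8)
The plan is to exhibit a single tensor in $\mathcal{Q}(\mathcal{A})$ whose first-order rank $r_1$ already equals the term rank, and then to invoke Lemma~\ref{le2.3} to conclude that this tensor has rank at least $\rho(\mathcal{A})$, so that ${\rm Mr}(\mathcal{A})\ge\rho(\mathcal{A})$. We may assume $\mathcal{A}\ne 0$ and $k\ge 2$; for $k=2$ the statement is exactly the inequality half of the classical matrix identity ${\rm Mr}(A)=\rho(A)$. First I would fix a witness for the term rank: by Definition~\ref{defi 1} there are positions $(i_1^{(t)},\ldots,i_k^{(t)})$ for $t=1,\ldots,\rho$, where $\rho=\rho(\mathcal{A})$, with $a_{i_1^{(t)}\cdots i_k^{(t)}}\ne 0$ and, for every mode $p\in[k]$, the indices $i_p^{(1)},\ldots,i_p^{(\rho)}$ pairwise distinct (in particular $\rho\le\min_p n_p$). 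Put $S=\{(i_1^{(t)},\ldots,i_k^{(t)}):t\in[\rho]\}$, and for a real parameter $\epsilon>0$ let $\widetilde{\mathcal{A}}_\epsilon$ be the tensor obtained from $\mathcal{A}$ by leaving the entries indexed by $S$ unchanged and multiplying every other entry by $\epsilon$. Since $\epsilon>0$ preserves every sign, ${\rm sgn}(\widetilde{\mathcal{A}}_\epsilon)={\rm sgn}(\mathcal{A})$, so $\widetilde{\mathcal{A}}_\epsilon\in\mathcal{Q}(\mathcal{A})$ for all such $\epsilon$.

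The core step is to show $r_1(\widetilde{\mathcal{A}}_\epsilon)\ge\rho$ for suitable $\epsilon$. Consider the $\rho$ vectors $v_t=(\widetilde{\mathcal{A}}_\epsilon)_{\bullet\,i_2^{(t)}\cdots i_k^{(t)}}\in\mathbb{R}^{n_1}$, which all belong to $V_1(\widetilde{\mathcal{A}}_\epsilon)$, and let $W$ be the $\rho\times\rho$ matrix formed by the rows $i_1^{(1)},\ldots,i_1^{(\rho)}$ of the $n_1\times\rho$ matrix with columns $v_1,\ldots,v_\rho$, so that $W_{st}=(\widetilde{\mathcal{A}}_\epsilon)_{i_1^{(s)} i_2^{(t)}\cdots i_k^{(t)}}$. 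The diagonal entry $W_{tt}$ is the unperturbed witness $a_{i_1^{(t)}\cdots i_k^{(t)}}\ne 0$, while for $s\ne t$ the index $(i_1^{(s)},i_2^{(t)},\ldots,i_k^{(t)})$ cannot lie in $S$---if it equalled $(i_1^{(r)},\ldots,i_k^{(r)})$, distinctness in mode $1$ would force $r=s$ and distinctness in mode $2$ would force $r=t$---so $W_{st}$ equals $\epsilon$ times a fixed real number. Hence $W=D+\epsilon E$ with $D$ the invertible diagonal matrix of witness entries and $E$ fixed, and $\det W$ is a polynomial in $\epsilon$ with nonzero constant term $\det D$; choosing any $\epsilon>0$ outside its finite set of roots makes $W$ invertible, so $v_1,\ldots,v_\rho$ are linearly independent. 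Then $r_1(\widetilde{\mathcal{A}}_\epsilon)\ge\rho$, and Lemma~\ref{le2.3} gives ${\rm rank}(\widetilde{\mathcal{A}}_\epsilon)\ge r_1(\widetilde{\mathcal{A}}_\epsilon)\ge\rho$, whence ${\rm Mr}(\mathcal{A})\ge\rho(\mathcal{A})$.

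I expect the main obstacle to be precisely that we are not allowed to annihilate the off-witness entries---doing so would leave $\mathcal{Q}(\mathcal{A})$---but only to shrink them, so the argument must tolerate those small nonzero terms; this is why the diagonal-dominance estimate $W=D+\epsilon E$ is the right tool instead of simply reading the rank off a genuine diagonal tensor. Everything else is optional streamlining: by Theorem~\ref{thm3.7} one could first pass to the subtensor $\mathcal{A}[\gamma_1,\ldots,\gamma_k]$ with $\gamma_p=\{i_p^{(t)}:t\in[\rho]\}$, and by Theorem~\ref{thm3.1} permute the modes so that the $\rho$ witness entries occupy the main diagonal, after which $\widetilde{\mathcal{A}}_\epsilon$ is literally a small perturbation of a diagonal tensor and the same determinant computation goes through.
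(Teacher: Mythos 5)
Your proof is correct, and it takes a genuinely different and more elementary route than the paper. The paper also starts from a witness set for the term rank and perturbs within the qualitative class, but in the opposite direction: it permutes the witness entries onto the diagonal of a $c\times c\times\cdots\times c$ subtensor, inflates those diagonal entries by a large $\varepsilon$, and invokes a nontrivial fact about the terms of the tensor determinant (that $\varepsilon^{c(k-1)^{c-1}}\prod_i \widetilde{b}_{i\cdots i}^{(k-1)^{c-1}}$ occurs as a term dominating all others in $\varepsilon$) to conclude $\det(\mathcal{B}_1)\neq 0$; it then passes through Theorem~\ref{thm3.14} to get ${\rm rank}(\mathcal{B}_1)\geq c$, and finishes with Theorems~\ref{thm3.1} and~\ref{thm3.7}. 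You instead shrink the non-witness entries by $\epsilon$, read off a $\rho\times\rho$ submatrix $W=D+\epsilon E$ of the mode-$1$ flattening whose determinant has nonzero constant term, and conclude $r_1\geq\rho$ directly, finishing with Lemma~\ref{le2.3}. Your mode-$1$/mode-$2$ distinctness argument showing the off-diagonal positions of $W$ avoid the witness set is sound, and choosing $\epsilon>0$ off the finite root set of $\det(D+\epsilon E)$ is legitimate. What your approach buys is self-containedness: it avoids the tensor determinant entirely (no Lemma~\ref{le2.5}, no cited proposition on determinant terms, no detour through sign nonsingularity), using only the flattening-rank bound that the paper's chain of lemmas ultimately bottoms out at anyway; what the paper's approach buys is that it exhibits a nonsingular member of the qualitative class of the witness subtensor, which ties the result to the sign-nonsingularity theme of Section~4.
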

\begin{proof} Without loss of generality, suppose that $\mathcal{A}\neq0$. Let $\rho(\mathcal{A})=c$ and $\gamma=[c]$. Then, there exist permutation matrices $P_1,\ldots,P_k$ such that $\mathcal{B}= (P_1,\ldots,P_k)\cdot\mathcal{A}$ has a subtensor $\mathcal{B}[\gamma,\ldots,\gamma]=(b_{i_1\cdots i_k})$ with nonzero diagonal elements, where $i_1,\ldots,i_k\in \gamma$. Take a tensor $\mathcal{B}_1=(\widetilde{b}_{i_1\cdots i_k})\in\mathcal{Q}(\mathcal{B}[\gamma,\ldots,\gamma])$ such that
\[
\widetilde{b}_{i_1\cdots i_k}=\left\{ {\begin{array}{*{20}c}
   \varepsilon b_{i_1\cdots i_k}, & \mbox{if}\ i_1=\cdots=i_k~ (\varepsilon>0), \\
   b_{i_1\cdots i_k}, & \mbox{otherwise}.  \\
\end{array}} \right.
\]
By [2, Proposition 4],
  we get $\varepsilon^{c(k-1)^{c-1}}\prod_{i=1}^c\widetilde{b}_{i\cdots i}^{(k-1)^{c-1}}$ is a term of ${\rm det}(\mathcal{B}_1)$ and the total degree with respect to $\varepsilon$ is not greater than $c(k-1)^{c-1}-2$. Hence, when $\varepsilon$ is large enough, we have
\[
{\rm sgn}({\rm det}(\mathcal{B}_1))={\rm sgn}\left(\varepsilon^{c(k-1)^{c-1}}\prod_{i=1}^c\widetilde{b}_{i\cdots i}^{(k-1)^{c-1}}\right)\neq0.
 \]
By Theorem \ref{thm3.14}, it yields that ${\rm rank}(\mathcal{B}_1)\geq c$, so ${\rm Mr}(\mathcal{B}[\gamma,\ldots,\gamma])\geq c$. By Theorem \ref{thm3.1} and Theorem \ref{thm3.7}, we get ${\rm Mr}(\mathcal{A})={\rm Mr}(\mathcal{B})$ and ${\rm Mr}(\mathcal{B})\geq {\rm Mr}(\mathcal{B}[\gamma,\ldots,\gamma])$, so ${\rm Mr}(\mathcal{A})\geq c$. Therefore, the theorem holds.
\end{proof}

Next is an example of ${\rm Mr}(\mathcal{A})>\rho(\mathcal{A})$.

\begin{example}
For an order $3$ dimension $2$ tensor $\mathcal{A}=(a_{i_1i_2i_3})$, where $a_{111}=2,~a_{112}=0,~a_{121}=0,~a_{122}=1,~a_{211}=1,~a_{212}=-1,~a_{221}=1,~a_{222}=1$.

It is easy to see that $\rho(\mathcal{A})=2$. According to ${\rm [8]}$, we can get ${\rm rank}(\widetilde{\mathcal{A}})=3$ for each $\widetilde{\mathcal{A}}\in \mathcal{Q}(\mathcal{A})$. Therefore, ${\rm Mr}(\mathcal{A})>\rho(\mathcal{A})$.
\end{example}

\cite{ck12} researched the sign nonsingular matrix.
\cite{ck11} studied the left and right inverse of tensors. In the following, we give some results on the sign left and sign right inverse.

\begin{thm}\label{thm3.151}
Let $\mathcal{A}$ be an order $k$ dimension $n$ tensor. If $\mathcal{A}$ has an order $m$ sign left or sign right inverse, then $\mathcal{A}$ is a sign nonsingular tensor.
\end{thm}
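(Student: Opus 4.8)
The plan is to argue by contradiction, exploiting the multiplicativity of the tensor determinant from Lemma \ref{le2.5}. Suppose $\mathcal{A}$ has an order $m$ sign left inverse (the sign right inverse case is symmetric, handled by passing to an appropriate transpose or by the dual of Lemma \ref{le2.5}). By definition, every tensor $\widetilde{\mathcal{A}}\in\mathcal{Q}(\mathcal{A})$ has an order $m$ left inverse, i.e.\ there exists an order $m$ dimension $n$ tensor $\mathcal{B}$ with $\mathcal{B}\cdot\widetilde{\mathcal{A}}=\mathcal{I}$. Applying Lemma \ref{le2.5} to the product $\mathcal{B}\cdot\widetilde{\mathcal{A}}$ gives
\[
{\rm det}(\mathcal{I})=\left({\rm det}(\mathcal{B})\right)^{(k-1)^{n-1}}\left({\rm det}(\widetilde{\mathcal{A}})\right)^{(m-1)^{n}}.
\]
Since ${\rm det}(\mathcal{I})=1\neq 0$, neither factor on the right can vanish; in particular ${\rm det}(\widetilde{\mathcal{A}})\neq 0$. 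As $\widetilde{\mathcal{A}}$ was an arbitrary member of $\mathcal{Q}(\mathcal{A})$, every tensor in the qualitative class of $\mathcal{A}$ is nonsingular, which is exactly the statement that $\mathcal{A}$ is a sign nonsingular tensor.

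The key steps, in order, are: (1) unpack the definition of "order $m$ sign left (right) inverse'' so that the hypothesis yields, for each $\widetilde{\mathcal{A}}\in\mathcal{Q}(\mathcal{A})$, an actual left (right) inverse tensor; (2) invoke Lemma \ref{le2.5} on the defining identity $\mathcal{B}\cdot\widetilde{\mathcal{A}}=\mathcal{I}$ (resp.\ $\widetilde{\mathcal{A}}\cdot\mathcal{B}=\mathcal{I}$); (3) use condition (2) of the determinant characterization, ${\rm det}(\mathcal{I})=1$, to conclude ${\rm det}(\widetilde{\mathcal{A}})\neq 0$; (4) quantify over all $\widetilde{\mathcal{A}}\in\mathcal{Q}(\mathcal{A})$ to obtain sign nonsingularity. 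For the right inverse case, if $\widetilde{\mathcal{A}}\cdot\mathcal{B}=\mathcal{I}$ with $\mathcal{A}$ of order $k$ and $\mathcal{B}$ of order $m$, then Lemma \ref{le2.5} gives ${\rm det}(\mathcal{I})=\left({\rm det}(\widetilde{\mathcal{A}})\right)^{(m-1)^{n-1}}\left({\rm det}(\mathcal{B})\right)^{(k-1)^{n}}$, and the same conclusion follows.

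I do not expect any serious obstacle here; this is a short deduction once Lemma \ref{le2.5} is in hand. The only points requiring a moment of care are bookkeeping ones: confirming that the order parameters in the exponents match the roles of $\mathcal{A}$ and $\mathcal{B}$ correctly in each of the two cases, and noting that the exponents $(k-1)^{n-1}$, $(m-1)^n$, etc., are positive integers (which uses $k,m\geq 2$, consistent with the hypotheses under which the general tensor product and determinant are defined in the excerpt), so that a nonzero base forces a nonzero power and, conversely, a vanishing power would force a vanishing base. With those checks in place the contradiction with ${\rm det}(\mathcal{I})=1$ is immediate.
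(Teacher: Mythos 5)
Your proposal is correct and is essentially identical to the paper's proof: both unpack the definition of sign left (right) inverse, apply Lemma \ref{le2.5} to the identity $\mathcal{P}\cdot\widetilde{\mathcal{A}}=\mathcal{I}$, and conclude ${\rm det}(\widetilde{\mathcal{A}})\neq 0$ from ${\rm det}(\mathcal{I})=1$ for every $\widetilde{\mathcal{A}}\in\mathcal{Q}(\mathcal{A})$. Your exponent bookkeeping in both the left- and right-inverse cases matches the lemma as stated.
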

\begin{proof}
Suppose that $\mathcal{A}$ has an order $m$ sign left inverse, then for each $\widetilde{\mathcal{A}}\in\mathcal{Q}(\mathcal{A})$ there exists an order $m$ dimension $n$ tensor $\mathcal{P}$ such that $\mathcal{P}\cdot\widetilde{\mathcal{A}}=\mathcal{I}$. By Lemma \ref{le2.5}, it yields that $({\rm det}(\mathcal{P}))^{(k-1)^{n-1}}({\rm det}(\widetilde{\mathcal{A}}))^{(m-1)^n}={\rm det}(\mathcal{I})=1$, so ${\rm det}(\widetilde{\mathcal{A}})\neq0$. Therefore, $\mathcal{A}$ is a sign nonsingular tensor. By the similar proof, we can get that the tensor having an order $m$ sign right inverse is a sign nonsingular tensor.
\end{proof}

Let $\mathcal{A}=(a_{i_1\cdots i_k})\in \mathbb{R}^{n_1\times n_2\times\cdots\times n_2}$ be an order $k$ tensor and let $\mathbb{M}(\mathcal{A})=(m_{ij})\in\mathbb{R}^{n_1\times n_2}$ denote the majorization matrix of $\mathcal{A}$,
where $m_{ij}=a_{ij\cdots j}$ ($i\in [n_1],~j\in [n_2]$) (see \cite{ck28}).

\cite{ck12} gave the sufficiency and necessity of a matrix to be sign nonsingular. In this paper, we show the sufficiency and necessity of a tensor having order $2$ sign left or sign right inverse.

\begin{thm}\label{thm3.17}
Let $\mathcal{A}=(a_{i_1\cdots i_k})\in \mathbb{R}^{n\times \cdots\times n}$ $(k\geq3)$, then $\mathcal{A}=(a_{i_1\cdots i_k})$ has an order 2 sign left inverse if and only if all of the following hold:

{\rm (1)} The elements of $\mathcal{A}$ are all zero expect for $a_{ij\cdots j}$ $(i,~j\in [n])$;

{\rm (2)} The majorization matrix $\mathbb{M}(\mathcal{A})$ is an $SNS$ matrix.
\end{thm}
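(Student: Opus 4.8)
The plan is to use the characterization of order $2$ left inverses from Lemma~\ref{le2.6}: $\mathcal{A}$ has an order $2$ left inverse if and only if there is an invertible matrix $P$ with $\mathcal{A}=P\cdot\mathcal{I}$. Since $\mathcal{A}$ having an order $2$ sign left inverse means this holds for \emph{every} $\widetilde{\mathcal{A}}\in\mathcal{Q}(\mathcal{A})$, I would first unravel what the product $P\cdot\mathcal{I}$ looks like entrywise. From the definition of the general tensor product with $\mathcal{B}=\mathcal{I}$, one computes $(P\cdot\mathcal{I})_{i\,i_2\cdots i_k}=\sum_{j}p_{ij}\delta_{j i_2}\cdots\delta_{j i_k}$, which is $p_{i i_2}$ when $i_2=\cdots=i_k$ and $0$ otherwise. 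Hence $\mathcal{A}=P\cdot\mathcal{I}$ says precisely that all entries of $\mathcal{A}$ vanish except the $a_{i j\cdots j}$, and that $\mathbb{M}(\mathcal{A})=P$. So condition (1) is exactly the ``shape'' constraint forced by $\mathcal{A}=P\cdot\mathcal{I}$, and the remaining content is that the invertible matrix appearing is $\mathbb{M}(\mathcal{A})$, with invertibility required for all sign-equivalent tensors.

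For the forward direction, suppose $\mathcal{A}$ has an order $2$ sign left inverse. Fix any $\widetilde{\mathcal{A}}\in\mathcal{Q}(\mathcal{A})$; by Lemma~\ref{le2.6} there is an invertible $P$ with $\widetilde{\mathcal{A}}=P\cdot\mathcal{I}$. By the entrywise computation above, every entry of $\widetilde{\mathcal{A}}$ outside the positions $(i,j,\ldots,j)$ is $0$; since $\widetilde{\mathcal{A}}$ and $\mathcal{A}$ have the same sign pattern, the same entries of $\mathcal{A}$ are $0$, giving (1). Moreover $\mathbb{M}(\widetilde{\mathcal{A}})=P$ is invertible, and $\mathbb{M}(\widetilde{\mathcal{A}})\in\mathcal{Q}(\mathbb{M}(\mathcal{A}))$ because the majorization matrix just reads off the entries $a_{ij\cdots j}$. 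As $\widetilde{\mathcal{A}}$ ranges over $\mathcal{Q}(\mathcal{A})$, $\mathbb{M}(\widetilde{\mathcal{A}})$ ranges over all of $\mathcal{Q}(\mathbb{M}(\mathcal{A}))$ (here I use (1): once the off-diagonal-type entries are pinned to $0$, the only free entries are exactly those of the majorization matrix, so every matrix in $\mathcal{Q}(\mathbb{M}(\mathcal{A}))$ is realized). Thus every matrix in $\mathcal{Q}(\mathbb{M}(\mathcal{A}))$ is invertible, i.e. $\mathbb{M}(\mathcal{A})$ is an $SNS$ matrix, giving (2).

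For the converse, assume (1) and (2). Take any $\widetilde{\mathcal{A}}\in\mathcal{Q}(\mathcal{A})$. By (1), all entries of $\widetilde{\mathcal{A}}$ outside the positions $(i,j,\ldots,j)$ are $0$ (they have sign $0$), so $\widetilde{\mathcal{A}}=\mathbb{M}(\widetilde{\mathcal{A}})\cdot\mathcal{I}$ by the entrywise identity. By (2), $\mathbb{M}(\widetilde{\mathcal{A}})\in\mathcal{Q}(\mathbb{M}(\mathcal{A}))$ is invertible, so Lemma~\ref{le2.6} gives that $\widetilde{\mathcal{A}}$ has an order $2$ left inverse. Since $\widetilde{\mathcal{A}}$ was arbitrary, $\mathcal{A}$ has an order $2$ sign left inverse.

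The main obstacle is the bookkeeping in the forward direction: carefully justifying that the map $\widetilde{\mathcal{A}}\mapsto\mathbb{M}(\widetilde{\mathcal{A}})$ surjects onto $\mathcal{Q}(\mathbb{M}(\mathcal{A}))$, which is where condition (1) is genuinely needed (without it, a sign-equivalent tensor could have nonzero entries off the majorization positions and the reduction to Lemma~\ref{le2.6} would fail). Everything else is a direct translation through the entrywise formula for $P\cdot\mathcal{I}$ and an application of Lemma~\ref{le2.6}. The sign-right-inverse analogue, presumably stated next, is handled symmetrically using the other half of Lemma~\ref{le2.6}, $\mathcal{A}=\mathcal{I}\cdot Q$, after checking what $\mathcal{I}\cdot Q$ looks like entrywise.
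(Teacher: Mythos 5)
Your proposal is correct and follows essentially the same route as the paper: both directions reduce to Lemma~\ref{le2.6}, the entrywise computation of $P\cdot\mathcal{I}$ forces the shape condition (1) and identifies $P$ with $\mathbb{M}(\widetilde{\mathcal{A}})$, and the $SNS$ property of $\mathbb{M}(\mathcal{A})$ is exactly what is needed to run the argument over all of $\mathcal{Q}(\mathcal{A})$. Your explicit justification that $\widetilde{\mathcal{A}}\mapsto\mathbb{M}(\widetilde{\mathcal{A}})$ surjects onto $\mathcal{Q}(\mathbb{M}(\mathcal{A}))$ is a detail the paper leaves implicit, but the arguments are otherwise the same.
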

\begin{proof}
We prove the necessity first. Suppose that $\mathcal{A}$ has an order 2 sign left inverse. It follows from Lemma \ref{le2.6} that for each $\widetilde{\mathcal{A}}\in\mathcal{Q}(\mathcal{A})$ there exists an invertible matrix $\widetilde{P}$ such that $\widetilde{\mathcal{A}}=\widetilde{P}\cdot\mathcal{I}$. By the general tensor product, it yields that (1) holds and $\widetilde{P}=\mathbb{M}(\widetilde{\mathcal{A}})$.
 Hence, each matrix in $\mathcal{Q}(\mathbb{M}(\mathcal{A}))$ is nonsingular. Thus, we get $\mathbb{M}(\mathcal{A})$ is an $SNS$ matrix.

Next, we prove the sufficiency. Since the elements of $\mathcal{A}$ satisfy
\begin{align*}
a_{ij_2\cdots j_k}=\left\{ {\begin{array}{*{20}c}
   m_{ij}, & if\ j_2=\cdots=j_k,~(i,~j_2,\ldots,~j_k\in [n]),\\
   0, & otherwise,  \\
\end{array}} \right.
\end{align*}
then each tensor $\widetilde{\mathcal{A}}\in\mathcal{Q}(\mathcal{A})$ satisfies $\widetilde{\mathcal{A}}=\mathbb{M}(\widetilde{\mathcal{A}})\cdot\mathcal{I}$. Note that $\mathbb{M}(\widetilde{\mathcal{A}})\in\mathcal{Q}(\mathbb{M}(\mathcal{A}))$, then $\mathbb{M}(\mathcal{A})$ is an $SNS$ matrix, it yields that the order 2 left inverse of $\widetilde{\mathcal{A}}$ exists. Thus, we get $\mathcal{A}$ has an order 2 sign left inverse.
\end{proof}

An order $k$ tensor $\mathcal{A}=(a_{i_1\cdots i_k})\in \mathbb{R}^{n\times \cdots\times n}$ is called sign symmetric tensor if ${\rm sgn}(a_{i_1\cdots i_k})={\rm sgn}(a_{\sigma(i_1,\cdots, i_k)})$, where $\sigma(i_1,\cdots,i_k)$ is an arbitrary permutation of the indices $i_1,\cdots ,i_k$.

\begin{thm}\label{thm3.18}
Let $\mathcal{A}=(a_{i_1\cdots i_k})\in \mathbb{R}^{n\times \cdots\times n}$ $(k\geq3)$, then $\mathcal{A}$ has an order 2 sign right inverse if and only if there exist a permutation matrix $P$ and a diagonal matrix $D$ with diagonal element $1$ or $-1$ such that ${\rm sgn}(\mathcal{A})=DP\cdot\mathcal{I}$. Especially, when $k$ is odd, $D=I$.
\end{thm}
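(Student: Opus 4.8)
The plan is to mirror the structure of the proof of Theorem \ref{thm3.17}, using Lemma \ref{le2.6} to reduce the ``order $2$ right inverse'' condition to a structural statement, and then passing from that structural statement to a sign condition. First I would prove necessity. Suppose $\mathcal{A}$ has an order $2$ sign right inverse; then for every $\widetilde{\mathcal{A}}\in\mathcal{Q}(\mathcal{A})$ there is an invertible matrix $Q$ with $\widetilde{\mathcal{A}}=\mathcal{I}\cdot Q$. Writing out the general tensor product $(\mathcal{I}\cdot Q)_{i\alpha_1\cdots\alpha_{k-1}}=\sum_{i_2,\ldots,i_k}\delta_{ii_2\cdots i_k}q_{i_2\alpha_1}\cdots q_{i_k\alpha_{k-1}}=q_{i\alpha_1}\cdots q_{i\alpha_{k-1}}$, so the entries of $\widetilde{\mathcal{A}}$ are $\widetilde a_{i j_2\cdots j_k}=q_{ij_2}q_{ij_3}\cdots q_{ij_k}$. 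For this to have a fixed sign pattern as $Q$ ranges over $\mathcal{Q}$ of a single invertible matrix, each row of $Q$ must have constant sign pattern, and—since $Q$ is invertible, hence has no zero row—each row of $Q$ has exactly one nonzero entry (otherwise, taking a row with two nonzero entries $q_{ij}, q_{ij'}$ of the same sign and a row of $Q$ with opposite-sign perturbations would change ${\rm sgn}(\widetilde a_{ij j'\cdots})$; more precisely the off-``diagonal'' entries $q_{ij}q_{ij'}$ with $j\neq j'$ would have to be sign-stable, which is impossible unless one of the two is forced to be zero across the whole class). An invertible square matrix with exactly one nonzero entry per row is $DP$ for a permutation matrix $P$ and an invertible diagonal matrix $D$; and sign-stability forces the nonzero entries to be sign-fixed, so we may absorb magnitudes and write ${\rm sgn}(Q)=DP$ with $D$ having diagonal entries $\pm1$. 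Then ${\rm sgn}(\mathcal{A})={\rm sgn}(\mathcal{I}\cdot Q)={\rm sgn}(\mathcal{I})\cdot(DP)$, using that the sign of a product of $k-1$ entries all coming from the single nonzero entry in row $i$ of $DP$ is ${\rm sgn}$ of $(d_i)^{k-1}$ times the placement dictated by $P$; so ${\rm sgn}(\mathcal{A})=DP\cdot\mathcal{I}$ after renaming $D^{k-1}$ as $D$ (which is again $\pm1$-diagonal), and when $k$ is odd $(d_i)^{k-1}=1$, forcing $D=I$.

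For sufficiency, suppose ${\rm sgn}(\mathcal{A})=DP\cdot\mathcal{I}$ for such $D,P$. Then every $\widetilde{\mathcal{A}}\in\mathcal{Q}(\mathcal{A})$ has the same zero/support structure as $DP\cdot\mathcal{I}$: its only nonzero entries are $\widetilde a_{i\,\pi(i)\cdots\pi(i)}$ where $\pi$ is the permutation of $P$, and these have sign $d_i$. Hence $\widetilde{\mathcal{A}}=\mathcal{I}\cdot Q$ where $Q$ is the matrix with $q_{i\pi(i)}=(\widetilde a_{i\pi(i)\cdots\pi(i)})^{1/(k-1)}$ — here one must check the $(k-1)$-th root is well defined and real: if $k$ is even, sign-consistency gives $\widetilde a_{i\pi(i)\cdots\pi(i)}=d_i|{\cdot}|$ and we take the real root with the appropriate sign; if $k$ is odd every real number has a real $(k-1)$-th... wait, $k$ odd means $k-1$ even, so actually the roots exist precisely because the entries are positive (as $D=I$ is forced). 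In either case $Q$ is a generalized permutation matrix, hence invertible, and $\mathcal{I}\cdot Q=\widetilde{\mathcal{A}}$, so by Lemma \ref{le2.6} $\widetilde{\mathcal{A}}$ has an order $2$ right inverse. Since $\widetilde{\mathcal{A}}\in\mathcal{Q}(\mathcal{A})$ was arbitrary, $\mathcal{A}$ has an order $2$ sign right inverse.

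The main obstacle I anticipate is the bookkeeping around the $(k-1)$-th power: converting between ${\rm sgn}(Q)$ and ${\rm sgn}(\mathcal{I}\cdot Q)$ requires tracking how signs of the single nonzero entry in each row propagate through a degree-$(k-1)$ monomial, and this is exactly what produces the parity-dependent conclusion $D=I$ when $k$ is odd. The other delicate point is the necessity argument that forces each row of $Q$ to have only one nonzero entry: one needs to argue that if a row had two nonzero entries, then some ``mixed'' entry $\widetilde a_{i j_2\cdots j_k}$ with not all $j_\ell$ equal would be nonzero and have a sign that is not stable over the whole qualitative class — this uses that $Q$, being invertible with a fixed sign pattern, belongs to a qualitative class all of whose members are invertible only in the generalized-permutation case, which is essentially the (easy direction of the) matrix fact that a product $q_{ij_2}\cdots q_{ij_k}$ over a genuinely repeated-index-free tuple cannot be forced to one sign while keeping invertibility. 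I would state this carefully as a short lemma-free paragraph rather than invoking a black box.
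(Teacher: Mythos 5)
Your sufficiency argument is essentially the paper's: from ${\rm sgn}(\mathcal{A})=DP\cdot\mathcal{I}$ you write each $\widetilde{\mathcal{A}}\in\mathcal{Q}(\mathcal{A})$ as $\mathcal{I}\cdot Q$ with $Q$ a generalized permutation matrix obtained by extracting $(k-1)$-th roots, and the parity bookkeeping (real roots always exist for $k$ even; for $k$ odd they exist because $D=I$ forces the relevant entries positive) is handled the same way the paper handles it via a matrix $D_1$ with $D_1^{k-1}=\widetilde{D}$. That direction is fine.

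The necessity direction has a genuine gap at its central step. You correctly reduce, via Lemma \ref{le2.6}, to $\widetilde{\mathcal{A}}=\mathcal{I}\cdot Q$ with $\widetilde{a}_{ij_2\cdots j_k}=q_{ij_2}\cdots q_{ij_k}$, and you correctly identify that everything hinges on showing each row of $Q$ has exactly one nonzero entry. But the justification you offer does not work. First, the mixed products $q_{ij}q_{ij'}^{k-2}$ with $j\neq j'$ are not forced to be sign-unstable: if all entries of row $i$ are positive, every such product is positive for every matrix with that row's sign pattern, so the claim that sign-stability ``is impossible unless one entry is zero'' is false. Second, your perturbation is applied to $Q$ rather than to the tensor: replacing $Q$ by some $Q'$ yields $\mathcal{I}\cdot Q'$, which still has an order $2$ right inverse and may simply leave $\mathcal{Q}(\mathcal{A})$; to reach a contradiction you must instead exhibit a tensor \emph{inside} $\mathcal{Q}(\mathcal{A})$ that admits no order $2$ right inverse. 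The true obstruction when row $i$ of $Q$ has two nonzero entries $q_{ij},q_{ij'}$ is about magnitudes and rank, not signs: the $i$-th slice of $\widetilde{\mathcal{A}}$ is the rank-one tensor $q_i\otimes\cdots\otimes q_i$ ($q_i$ the $i$-th row of $Q$), so its entries satisfy $|\widetilde{a}_{ijj'\cdots j'}|^{k-1}=|\widetilde{a}_{ij\cdots j}|\,|\widetilde{a}_{ij'\cdots j'}|^{k-2}$, a relation destroyed by rescaling a single entry while preserving all signs; the rescaled tensor lies in $\mathcal{Q}(\mathcal{A})$ but its $i$-th slice is no longer a $(k-1)$-fold outer power of one vector, hence it is not $\mathcal{I}\cdot W$ for any $W$. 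The paper packages exactly this step as follows: every slice $(\widetilde{\mathcal{A}})_i^{(1)}=w_i\otimes\cdots\otimes w_i$ has ${\rm Mr}=1$ over its qualitative class, while its term rank is at least the number $p_i$ of nonzero entries of $w_i$ (witnessed by the diagonal positions), so Theorem \ref{thm3.15} (${\rm Mr}\geq\rho$) forces $p_i=1$. You need some version of this rank or magnitude argument; the sign-instability claim cannot be repaired into one.
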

\begin{proof}
We prove the sufficiency first. Suppose that there exist a permutation matrix $P$ and a diagonal matrix $D$ with diagonal element $1$ or $-1$ such that ${\rm sgn}(\mathcal{A})=DP\cdot\mathcal{I}$ (when $k$ is odd, $D=I$). For each tensor $\widetilde{\mathcal{A}}\in \mathcal{Q}(\mathcal{A})$, there exists a matrix $\widetilde{D}\in\mathcal{Q}(D)$ such that $\widetilde{\mathcal{A}}=\widetilde{D}P\cdot\mathcal{I}$. Since $D$ is a diagonal matrix with diagonal element $1$ or $-1$ (when $k$ is odd, $D=I$), then there exists a matrix $D_1\in\mathcal{Q}(D)$ such that $D_1^{k-1}=\widetilde{D}$. Let $P_2=P^\top D^{-1}_1$, then
\[
\widetilde{\mathcal{A}}\cdot P_2=\widetilde{D}P\cdot\mathcal{I}\cdot P^\top D^{-1}_1=\mathcal{I},
\]
so each tensor in $\mathcal{Q}(\mathcal{A})$ has an order 2 right inverse.

Next we prove the necessity. Suppose that $\mathcal{A}$ has an order 2 sign right inverse. According to Lemma \ref{le2.6}, we have for each $\widetilde{\mathcal{A}}\in\mathcal{Q}(\mathcal{A})$, there exists an invertible matrix $W\in \mathbb{R}^{n\times n}$ such that
\[
\widetilde{\mathcal{ A}} = \mathcal {I}\cdot W = (I, {W}^\mathrm{T}, \ldots, {W}^\mathrm{T})\cdot\mathcal{ I} = \sum\limits_{i = 1}^n {{e_i} \otimes {w_i} \otimes  \cdots  \otimes {w_i}} ,\]
where $e_i$ is the unit vector whose $i$-th component is $1$, $w_i\neq 0$ denotes the $i$-th row of ${W}$ ($i=1,\ldots,n$).
Unfolding $\widetilde{\mathcal{A}}$ into slices by $1$-th order gives that $\widetilde{\mathcal{A}}=((\widetilde{\mathcal{A}})_1^{(1)},\ldots,(\widetilde{\mathcal{A}})_n^{(1)})$, where each slice $(\widetilde{\mathcal{A}})_i^{(1)}$ is an order $k-1$ tensor which can be written as the rank one form
 $$(\widetilde{\mathcal{A}})_i^{(1)}=w_i\otimes \cdots\otimes w_i,$$
 for each $i\in[n]$. It is easy to see that $(\widetilde{\mathcal{A}})_i^{(1)}$ is a sign  symmetric tensor and ${\rm mr}((\widetilde{\mathcal{A}})_i^{(1)})={\rm Mr}((\widetilde{\mathcal{A}})_i^{(1)})=1~(i=1,\ldots,n)$. Let $p_i$ be the number of the nonzero elements of $w_i$. Obviously, $\rho((\widetilde{\mathcal{A}})_i^{(1)})\geq p_i$. According to Theorem \ref{thm3.15}, we have $1={\rm Mr}((\widetilde{\mathcal{A}})_i^{(1)})\geq p_i\geq1$, so $p_i=1$ ($i=1,\ldots,n$). Let $\widehat{w}_i$ be the only nonzero element of $w_i$ ($i=1,\ldots,n$). So the tensor $(\widetilde{\mathcal{A}})_i^{(1)}$ has only one nonzero element $a_{ij\cdots j}=(\widehat{w}_i)^{k-1}$ ($i=1,\ldots,n$). Especially, when $k$ is odd, ${\rm sgn}(a_{ij\cdots j})=+$. Hence,
\begin{align*}
a_{ij_2\cdots j_k}=\left\{ {\begin{array}{*{20}c}
   m_{ij},& if\ j_2=\cdots=j_k~( i,~j_2,\ldots,j_k\in [n]),  \\
   0, & otherwise . \\
\end{array}} \right.
\end{align*}
So, the majorization matrix $\mathbb{M}(\mathcal{A})=(m_{ij})$. Note that ${\rm sgn}(m_{ij})={\rm sgn}(w_i^j)^{k-1}\neq 0$. Since $W$ is invertible and there is only one nonzero element in each row of $W$, it is easy to see that $\mathbb{M}(\mathcal{A})$ can be permuted into a diagonal matrix.
Therefore, there exist a permutation matrix $P$ and a diagonal matrix $D$ with the diagonal entries $1$ or $-1$ such that $DP{\rm sgn}(\mathbb{M}(\mathcal{A}))=I$.
Thus, we get ${\rm sgn}(\mathcal{A})=DP\cdot\mathcal{I}$.

\end{proof}

\vspace{3mm}
\noindent
\textbf{References}

\end{document}